\title{Maximizing Products of Linear Forms, and \\
  The Permanent of Positive Semidefinite Matrices}
\author{Chenyang Yuan \and Pablo A. Parrilo}
\newtheorem{theorem}{Theorem}
\numberwithin{theorem}{section}
\newtheorem{definition}[theorem]{Definition}
\newtheorem{remark}[theorem]{Remark}
\newtheorem{lemma}[theorem]{Lemma}
\newtheorem{corollary}[theorem]{Corollary}
\newtheorem{proposition}[theorem]{Proposition}
\newtheorem{fact}[theorem]{Fact}
\newtheorem{conjecture}[theorem]{Conjecture}
\DeclareMathOperator{\rel}{rel}
\DeclareMathOperator{\GammaText}{Gamma}
\newcommand{\cnormal}{\mathcal{C}\mathcal{N}}
\newcommand{\iid}{i.i.d.\ }
\begin{document}
\maketitle

\begin{abstract}
  We study the convex relaxation of a polynomial optimization problem,
  maximizing a product of linear forms over the complex sphere. We show that
  this convex program is also a relaxation of the permanent of Hermitian
  positive semidefinite (HPSD) matrices. By analyzing a constructive randomized
  rounding algorithm, we obtain an improved multiplicative approximation factor
  to the permanent of HPSD matrices, as well as computationally efficient
  certificates for this approximation. We also propose an analog of van der
  Waerden's conjecture for HPSD matrices, where the polynomial optimization
  problem is interpreted as a relaxation of the permanent.
\end{abstract}

\section{Introduction}
We study the problem of maximizing a product of linear forms on the complex
$(n-1)$-sphere of radius $\sqrt n$:
\begin{align} \label{eq:product-of-linear-forms}
  r(A) \equiv \max_{\norm{x}^2=n} \prod_{i=1}^n \abs{\dotp{x, v_i}}^2,
\end{align}
where $A = V^\dagger V$ and $v_i$ are the columns of $V$. We show that the
natural convex relaxation of \eqref{eq:product-of-linear-forms},
\begin{align} \label{eq:convex-rel}
  \max \, \prod_{i=1}^n v_i^\dagger P v_i \,\mbox{ s.t. }\,
  \Tr(P) = n,\, P \succeq 0,
\end{align}
is also a relaxation of the permanent of $A$, which is defined by
\begin{align} \label{eq:def-per}
  \per(A) = \sum_{\sigma \in \mathbf{S}_n} \prod_{i=1}^n A_{i,\sigma(i)},
\end{align}
where the sum is over all $n!$ permutations of $n$ elements. Computing the
permanent exactly is \#P-hard \cite{Valiantcomplexitycomputingpermanent1979},
and approximation efforts have been focused on classes of matrices with
computationally efficient certificates of permanent non-negativity. For matrices
with non-negative entries, \cite{JerrumPolynomialtimeApproximationAlgorithm2004}
gave a randomized algorithm achieving a $(1+\epsilon)$-approximation. There has
been recent interest in approximating the permanent of HPSD matrices due to
their applications in quantum information \cite{GrierNewhardnessresults2018}.
The work by \cite{AnariSimplyExponentialApproximation2017a} gave the first
polynomial-time algorithm for approximating the permanent of HPSD matrices with
a simply exponential multiplicative approximation factor of
$\frac{n!}{n^n}e^{-n\gamma}$, where $\gamma \approx 0.577$ is the
Euler-Mascheroni constant. Their algorithm is based on the following convex
program relaxation of the permanent.
\begin{definition}
  Given a HPSD matrix $A \in \C^{n \times n}$, we define $\rel(A)$ as the
  solution to the optimization problem:
  \begin{align}\label{eq:rel-opt-problem}
    \rel(A) \equiv \left\{
    \begin{array}{rl}
      \min \,& \prod_{i=1}^n D_{ii} \\
      \text{s.t.} & A \preceq D, \quad D \text{ is diagonal}
    \end{array}
                    \right.
  \end{align}
\end{definition}
In this paper, we show that $\rel(A)$ is equivalent to the convex relaxation
\eqref{eq:convex-rel}. Our main result, Theorem
\ref{thm:approx-factor-prod-linear-complex}, uses this connection between the
polynomial optimization problem \eqref{eq:product-of-linear-forms} and $\per(A)$
to provide a new analysis of the approximation of $\per(A)$ in terms of the rank
of the optimal solution to \eqref{eq:convex-rel}. By bounding this rank, we
prove an improved approximation factor for all finite $n$:
\begin{corollary} \label{cor:approx-factor} Given a HPSD matrix
  $A \in \C^{n \times n}$, $\rel(A)$ is an
  $\frac{n!}{n^n}e^{-nL_r}$-approximation to $\per(A)$:
  \begin{align*}
    \frac{n!}{n^n}e^{-nL_r} \rel(A) \le \per(A) \le \rel(A)
  \end{align*}
  where $r = O(\sqrt n)$, $L_r = H_{r-1} - \log(r)$, and
  $H_r = \sum_{k=1}^{r} \frac{1}{k}$ is the $r$-th harmonic number.
\end{corollary}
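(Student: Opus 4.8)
The plan is to read off the two-sided bound from Theorem~\ref{thm:approx-factor-prod-linear-complex} combined with a bound on the rank of an optimal solution of~\eqref{eq:convex-rel}. The upper inequality $\per(A)\le\rel(A)$ is the ``easy'' direction and has essentially already been established: for any diagonal $D$ with $D\succeq A$ one has $\per(A)\le\per(D)=\prod_i D_{ii}$ by monotonicity of the permanent on the positive semidefinite cone, and minimizing over such $D$ gives $\per(A)\le\rel(A)$. For the lower inequality, Theorem~\ref{thm:approx-factor-prod-linear-complex} already provides $\frac{n!}{n^n}e^{-nL_r}\rel(A)\le\per(A)$, where $r$ is the rank of an optimal solution of~\eqref{eq:convex-rel} (and where we use that $\rel(A)$ and~\eqref{eq:convex-rel} have the same value). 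So the only remaining task is to show that~\eqref{eq:convex-rel} admits an optimal solution of rank $r=O(\sqrt n)$.

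To prove this rank bound, I would use that the objective $\prod_{i=1}^n v_i^\dagger P v_i$ of~\eqref{eq:convex-rel} depends on $P$ only through the $n$ real numbers $p_i:=\langle v_iv_i^\dagger,P\rangle$. Fix any optimal $P^\star$, put $p_i^\star:=\langle v_iv_i^\dagger,P^\star\rangle$, and consider the slice
\[
  F=\bigl\{\,P\succeq0:\ \Tr(P)=n,\ \langle v_iv_i^\dagger,P\rangle=p_i^\star\ \text{for all }i\,\bigr\}.
\]
This is a nonempty spectrahedron (it contains $P^\star$) defined by only $n+1$ affine constraints in the real vector space of $n\times n$ Hermitian matrices, so a standard bound on the rank of extreme points of spectrahedra (Barvinok, Pataki) produces a $P'\in F$ whose rank $r$ satisfies $r^2\le n+1$, i.e.\ $r\le\sqrt{n+1}=O(\sqrt n)$. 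Since $\langle v_iv_i^\dagger,P'\rangle=p_i^\star$ for all $i$, the matrix $P'$ has the same objective value as $P^\star$ and is therefore also optimal; feeding it into Theorem~\ref{thm:approx-factor-prod-linear-complex} yields the lower bound with this $r$. Because $L_r=H_{r-1}-\log r$ is increasing in $r$, passing to the smaller value $r=O(\sqrt n)$ only strengthens the conclusion, so nothing is lost by using $P'$ in place of $P^\star$.

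The main obstacle is the rank bound, and the key point making it tractable is the reduction to a pure feasibility problem: once the $n$ functionals $p_i$ are frozen, every point of $F$ is optimal, and Barvinok--Pataki then supplies a low-rank one. The things to be careful about are minor: the constraint bookkeeping (the normalization $\Tr(P)=n$ adds exactly one constraint, so the count is $n+1$, not $n$), the fact that one should argue with an extreme point of $F$ rather than with $P^\star$ directly, and---if Theorem~\ref{thm:approx-factor-prod-linear-complex} happens to be stated only for optimal solutions of a particular rank---a quick check that its rounding argument applies verbatim to the lower-rank optimizer $P'$.
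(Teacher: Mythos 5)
Your proposal is correct and follows essentially the same route as the paper: fix the $n+1$ affine constraints $\langle v_iv_i^\dagger,P\rangle=p_i^\star$ and $\Tr(P)=n$, invoke a low-rank result for spectrahedra to obtain an optimal $P'$ with $\rank(P')=O(\sqrt n)$, and feed it into Theorem~\ref{thm:approx-factor-prod-linear-complex}. The only minor difference is that the paper cites the constructive (polynomial-time) rank-reduction lemma of Ai--Huang--Zhang (Lemma~\ref{lem:rank-reduction}) rather than the purely existential Barvinok--Pataki bound; since the corollary as stated is just a two-sided inequality this does not matter, but the constructive version is what makes the certificate efficiently computable, which is why the paper uses it. Your observation that $L_r$ is increasing in $r$, so passing to the lower-rank optimizer only strengthens the bound, is a correct and worthwhile sanity check that the paper leaves implicit.
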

From the definition of the Euler-Mascheroni constant,
$\lim_{n \rightarrow \infty} L_r = \gamma$. For any finite $n$, $L_r < \gamma$
and thus $\frac{n!}{n^n}e^{-nL_r} > \frac{n!}{n^n}e^{-n\gamma}$. More precisely,
using Proposition \ref{prop:approx-asymptotics}, we can show that this is a
$e^{O(\sqrt n)}$ multiplicative improvement.
\cite{AnariSimplyExponentialApproximation2017a} also constructed a series of
matrices $A_k$ such that $(\rel(A_k)/\per(A_k)))^{1/n} \rightarrow e^{1+\gamma}$
as $k \rightarrow \infty$. However since this result only rules out improvements
on the order of $e^{O(n)}$, it does not contradict Corollary
\ref{cor:approx-factor}.

In Section \ref{sec:proof} we analyze the convex relaxation of
\eqref{eq:product-of-linear-forms}, describe a rounding procedure and prove its
approximation factor. In particular we prove that
\begin{align*}
  e^{-nL_r} \rel(A) \le r(A) \le \rel(A).
\end{align*}
In Section \ref{sec:permanent} we prove Theorem
\ref{thm:approx-factor-prod-linear-complex}. We first show that the convex
relaxation of $r(A)$ is equivalent to $\rel(A)$. Then using the vector produced
by the rounding procedure of the relaxation, we construct a rank-1 matrix whose
permanent lower bounds $\per(A)$, thus showing that $\rel(A)$ also
well-approximates $\per(A)$. Note that in
\cite{AnariSimplyExponentialApproximation2017a} only the existence of this
rank-1 matrix is shown, but in our analysis we provide an explicit construction
of a rank-1 matrix whose permanent lower bounds $\per(A)$. This combined with
the diagonal matrix in \eqref{eq:rel-opt-problem} whose permanent upper-bounds
$\per(A)$ certifies the approximation. In Section \ref{sec:conj} we explore
reasons why the convex relaxation of \eqref{eq:product-of-linear-forms} is
equivalent to $\rel(A)$. We conjecture that \eqref{eq:product-of-linear-forms}
is itself a $\frac{n!}{n^n}$ approximation to $\per(A)$, explain why it is an
analogue of van der Waerden's conjecture, and show that it is implied by another
long-standing permanent conjecture.

\section{Preliminaries}
For any $x \in \C$, let $x^*$ be its complex conjugate, and $\abs{x}^2 = x
x^*$. For any matrix $A \in \C^{n \times m}$, let $A^\dagger = (A^*)^T$ be its
conjugate transpose. Given $a, b \in \C^n$, let $\dotp{a, b} = a^\dagger b$ be
the inner product on the Hilbert space $\C^n$, and $\norm{a}^2 = \dotp{a,
  a}$. Let $S_\C(n) = \{ x \in \C^n \mid \norm{x}^2 = n\}$ be the complex sphere
in $n$ dimensions of radius $\sqrt n$. A matrix $A$ is Hermitian if
$A = A^\dagger$, and is Hermitian positive semidefinite (HPSD) if in addition
$x^\dagger A x \ge 0$ for all $x \in \C^n$. We can also denote this as
$A \succeq 0$. The $\succeq$ operator induces a partial order called the
L\"owner order, where $A \succeq B$ if $A - B \succeq 0$. If $A \succeq 0$, it
can be factorized as $A = L^\dagger L$, where $L \in \C^{n \times n}$ (for
example by the Cholesky decomposition).

\subsection{Circularly-Symmetric Gaussian Random Variables}
In this paper we will use a few results involving vectors of
circularly-symmetric  valued Gaussian variables.
\begin{definition}[Circularly-symmetric Gaussian random vector]
  The complex-valued Gaussian random variable $Z = Z_r + i Z_c$ is
  circularly-symmetric if $Z_r$ and $Z_c$ are \iid drawn from
  $\mathcal{N}(0, \frac{1}{2})$. The random vector
  $\mathbf{Z} = [Z_1, \ldots, Z_n]^T$ is drawn from the distribution
  $\cnormal(0, \Sigma)$ if $Z_i$ are \iid circularly-symmetric Gaussians and
  $\Ex[\mathbf{Z}\mathbf{Z}^\dagger] = \Sigma$.
\end{definition}
The name circularly-symmetric comes from the fact that $\mathbf{Z}$ is invariant
under rotations in the complex plane, meaning that $e^{i\theta}\mathbf{Z}$ has
the same distribution as $\mathbf{Z}$ for all real $\theta$. All complex
multivariate Gaussians in this paper are circularly symmetric. Similar to real
multivariate Gaussians, a linear transform on the random vector induces a
congruence transform on the covariance matrix.
\begin{proposition}[Linear transformations of complex multivariate Gaussians]
  \label{prop:cnormal-linear-transform} Given
  $\mathbf{Z} \sim \cnormal(0, \Sigma)$ and any complex matrix $A$,
  $A\mathbf{Z}$ is also circularly symmetric and has the distribution
  $\cnormal(0, A \Sigma A^\dagger)$.
\end{proposition}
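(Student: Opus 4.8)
The plan is to work with characteristic functions, which disposes of both claims at once. For a complex random vector $\mathbf W \in \C^n$ write $\phi_{\mathbf W}(\mathbf u) = \Ex\bigl[\exp(i\operatorname{Re}\dotp{\mathbf u, \mathbf W})\bigr]$ for $\mathbf u \in \C^n$; after identifying $\C^n$ with $\mathbb{R}^{2n}$ this is exactly the ordinary characteristic function of the real random vector $(\operatorname{Re}\mathbf W, \operatorname{Im}\mathbf W)$, so it determines the law of $\mathbf W$. I would also use the elementary identity $\operatorname{Re}\dotp{\mathbf u, A\mathbf Z} = \operatorname{Re}(\mathbf u^\dagger A\mathbf Z) = \operatorname{Re}\dotp{A^\dagger\mathbf u, \mathbf Z}$.

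First I would record the closed form $\phi_{\mathbf Z}(\mathbf u) = \exp\bigl(-\tfrac14 \mathbf u^\dagger \Sigma \mathbf u\bigr)$ for $\mathbf Z \sim \cnormal(0,\Sigma)$. For a single standard circularly-symmetric $Z = Z_r + iZ_c$ with $Z_r, Z_c \sim \mathcal N(0,\tfrac12)$ independent, the real Gaussian moment generating function gives $\Ex[e^{i\operatorname{Re}(u^* Z)}] = \Ex[e^{i(u_r Z_r + u_c Z_c)}] = e^{-\frac14(u_r^2 + u_c^2)} = e^{-\frac14|u|^2}$. Writing $\mathbf Z = L\mathbf Z_0$ with $LL^\dagger = \Sigma$ (possible since $\Sigma \succeq 0$) and $\mathbf Z_0$ a vector of i.i.d.\ standard circularly-symmetric entries, and using $\operatorname{Re}\dotp{\mathbf u, L\mathbf Z_0} = \operatorname{Re}\dotp{L^\dagger \mathbf u, \mathbf Z_0}$, independence of the coordinates of $\mathbf Z_0$ then yields $\phi_{\mathbf Z}(\mathbf u) = \prod_j e^{-\frac14|(L^\dagger \mathbf u)_j|^2} = e^{-\frac14 \mathbf u^\dagger LL^\dagger \mathbf u} = e^{-\frac14 \mathbf u^\dagger \Sigma \mathbf u}$. (If one instead takes this formula as the definition of $\cnormal(0,\Sigma)$, this step is vacuous.) The conclusion is then a one-line computation: for $\mathbf W = A\mathbf Z$,
\[
  \phi_{A\mathbf Z}(\mathbf u) = \phi_{\mathbf Z}(A^\dagger \mathbf u) = \exp\!\bigl(-\tfrac14 (A^\dagger\mathbf u)^\dagger \Sigma (A^\dagger\mathbf u)\bigr) = \exp\!\bigl(-\tfrac14 \mathbf u^\dagger A\Sigma A^\dagger \mathbf u\bigr),
\]
which is precisely $\phi_{\cnormal(0,\, A\Sigma A^\dagger)}(\mathbf u)$; since the characteristic function determines the law, $A\mathbf Z \sim \cnormal(0, A\Sigma A^\dagger)$. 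Circular symmetry is then automatic, or may be seen directly from $e^{i\theta}A\mathbf Z = A(e^{i\theta}\mathbf Z)$ together with the circular symmetry of $\mathbf Z$.

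The one subtlety worth care is that a complex random vector is \emph{not} in general determined by $\Sigma = \Ex[\mathbf W\mathbf W^\dagger]$ alone --- one must also control the pseudo-covariance $\Ex[\mathbf W\mathbf W^T]$, which vanishes exactly when $\mathbf W$ is circularly symmetric --- so "$A\Sigma A^\dagger$ is the covariance" is not by itself enough to write "$\cnormal(0, A\Sigma A^\dagger)$". The characteristic-function route sidesteps this because $\phi_{\mathbf Z}(\mathbf u) = e^{-\frac14\mathbf u^\dagger\Sigma\mathbf u}$ already encodes circular symmetry. A generating-function-free alternative, which I would mention as a remark, is to write $A = A_r + iA_c$, $\mathbf Z = \mathbf Z_r + i\mathbf Z_c$, observe that $(\operatorname{Re}(A\mathbf Z), \operatorname{Im}(A\mathbf Z))$ is a fixed real-linear image of the centered real Gaussian vector $(\mathbf Z_r, \mathbf Z_c)$ and hence real Gaussian, and then verify $\Ex[(A\mathbf Z)(A\mathbf Z)^\dagger] = A\Sigma A^\dagger$ and $\Ex[(A\mathbf Z)(A\mathbf Z)^T] = A\,\Ex[\mathbf Z\mathbf Z^T]\,A^T = 0$ by direct expansion, the last equality using that circular symmetry of $\mathbf Z$ forces $\Ex[\mathbf Z\mathbf Z^T] = 0$. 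I expect the characteristic-function computation itself to be entirely routine; the only genuine care needed is this pseudo-covariance point, so that the full statement (not merely the covariance identity) is justified.
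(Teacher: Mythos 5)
Your proof is correct and complete; note that the paper itself does not prove this proposition at all --- it simply defers to the reference \cite{GallagerCircularlySymmetricGaussianrandom} --- so there is no in-paper argument to compare against, and your characteristic-function route is the standard one found there. Two remarks. First, the paper's literal definition of $\cnormal(0,\Sigma)$ (``$Z_i$ i.i.d.\ circularly-symmetric with $\Ex[\mathbf Z\mathbf Z^\dagger]=\Sigma$'') is only coherent for $\Sigma$ a multiple of the identity; your implicit reading, $\mathbf Z = L\mathbf Z_0$ with $LL^\dagger = \Sigma$ and $\mathbf Z_0$ standard, is the intended one, and your computation $\phi_{L\mathbf Z_0}(\mathbf u) = e^{-\frac14 \mathbf u^\dagger LL^\dagger\mathbf u}$ has the side benefit of showing this definition is well-posed (independent of the factorization $L$). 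Second, under that reading the covariance claim admits a one-line proof with no characteristic functions: $A\mathbf Z = (AL)\mathbf Z_0$ and $(AL)(AL)^\dagger = A\Sigma A^\dagger$, so $A\mathbf Z \sim \cnormal(0, A\Sigma A^\dagger)$ by definition, with circular symmetry inherited from $\mathbf Z_0$ exactly as you observe. Your longer route buys something real, though: it makes explicit the pseudo-covariance subtlety, i.e.\ that $\Ex[\mathbf W\mathbf W^\dagger]$ alone does not pin down a complex Gaussian law, which is the one point a careless proof of this proposition would silently skip. Everything checks: the identity $\operatorname{Re}\dotp{\mathbf u, A\mathbf Z} = \operatorname{Re}\dotp{A^\dagger\mathbf u,\mathbf Z}$, the scalar computation $e^{-\frac14|u|^2}$ from $Z_r, Z_c \sim \mathcal N(0,\tfrac12)$, and the fact that $\mathbf u \mapsto \Ex[e^{i\operatorname{Re}\dotp{\mathbf u,\mathbf W}}]$ determines the law of $(\operatorname{Re}\mathbf W, \operatorname{Im}\mathbf W)$ are all right.
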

The proof of this proposition and more about complex multivariate Gaussians can
be found in \cite{GallagerCircularlySymmetricGaussianrandom}. In particular,
this tells us that $\mathbf{Z} \sim \cnormal(0, I)$ is invariant under unitary
transformations.

In the analysis of our rounding procedure, we use some results about the gamma
distribution.
% See for example: https://en.wikipedia.org/wiki/Exponential_family#Moment-generating_function_of_the_sufficient_statistic
\begin{fact}[Expectation of log of gamma random
  variable] \label{fact:ex-log-gamma} Let $X \sim \GammaText(\alpha, \beta)$ be
  drawn from the gamma distribution, with density
  $p(x; \alpha, \beta)=\Gamma(\alpha)^{-1} \beta^\alpha x^{\alpha-1}e^{-\beta
    x}$. Then
  \begin{align*}
    \Ex[\log X] = \psi(\alpha) - \log(\beta),
  \end{align*}
  where $\psi(x) = \frac{d}{dx} \log \Gamma(x)$ is the digamma function.
\end{fact}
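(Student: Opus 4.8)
The plan is to first reduce to the case $\beta = 1$ by a change of variables, and then recognize the remaining integral as the logarithmic derivative of the Gamma function.

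First I would write $\Ex[\log X] = \int_0^\infty (\log x)\, \Gamma(\alpha)^{-1}\beta^\alpha x^{\alpha-1} e^{-\beta x}\, dx$ and substitute $y = \beta x$. Since $\log x = \log y - \log\beta$ and the substituted density is exactly that of a $\GammaText(\alpha, 1)$ variable, this yields $\Ex[\log X] = \Ex[\log Y] - \log\beta$ with $Y \sim \GammaText(\alpha, 1)$. Hence it suffices to prove $\Ex[\log Y] = \psi(\alpha)$.

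For that, I would start from the defining identity $\Gamma(\alpha) = \int_0^\infty x^{\alpha-1} e^{-x}\, dx$ and differentiate both sides in $\alpha$. Writing $x^{\alpha-1} = e^{(\alpha-1)\log x}$, so that $\frac{\partial}{\partial\alpha} x^{\alpha-1} = (\log x)\, x^{\alpha-1}$, one obtains $\Gamma'(\alpha) = \int_0^\infty (\log x)\, x^{\alpha-1} e^{-x}\, dx$. Dividing by $\Gamma(\alpha)$ gives $\Ex[\log Y] = \Gamma'(\alpha)/\Gamma(\alpha) = \frac{d}{d\alpha}\log\Gamma(\alpha) = \psi(\alpha)$, which combined with the first step proves the fact.

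The only nontrivial point is justifying the interchange of differentiation and integration. I would handle this with the standard dominated-convergence argument: restricting $\alpha$ to a compact subinterval $[\alpha_0,\alpha_1] \subset (0,\infty)$, the $\alpha$-derivative of the integrand, $|\log x|\, x^{\alpha-1} e^{-x}$, is dominated by an integrable function, obtained by splitting the integral at $x = 1$ and bounding by $|\log x|\, x^{\alpha_0-1}$ near the origin and by $|\log x|\, x^{\alpha_1-1} e^{-x}$ near infinity, both of which are integrable. Alternatively, one can invoke analyticity of $\Gamma$ on the right half-plane to make the interchange immediate. This is routine, so the substance of the proof is really just the change of variables and recognizing the logarithmic derivative.
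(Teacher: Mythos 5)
Your proof is correct and complete: the reduction to $\beta=1$, the differentiation of $\Gamma(\alpha)=\int_0^\infty x^{\alpha-1}e^{-x}\,dx$ under the integral sign, and the dominated-convergence justification are all sound. The paper does not write this computation out; it simply cites the exponential-family identity that the expectation of a sufficient statistic equals the gradient of the log-partition function (for the gamma family, $T(x)=(\log x, x)$ and $A(\alpha,\beta)=\log\Gamma(\alpha)-\alpha\log\beta$, so $\Ex[\log X]=\partial_\alpha A=\psi(\alpha)-\log\beta$). The underlying mechanism is identical --- differentiating the normalizing constant in $\alpha$ --- so your argument is really the self-contained instantiation of the general theorem the paper invokes; what you gain is independence from the cited reference, at the cost of having to verify the interchange of derivative and integral by hand, which you do correctly.
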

This follows from the fact that the gamma distribution is an exponential family,
and $\log x$ is a sufficient statistic (see section 2.2 of
\cite{KeenerTheoreticalstatisticstopics2010} for more details). Next we prove a
useful identity.
\begin{fact} \label{fact:ex-log-CN} Let
  $[z_1, \ldots, z_r]^T \sim \cnormal(0, I_r)$, $H_n = \sum_{k=1}^n \frac{1}{k}$
  be the $n$-th harmonic number and
  $\gamma = \lim_{n\rightarrow \infty} (H_n - \log n)$ be the Euler-Mascheroni
  constant. Then
  \begin{align*}
    \Ex \log\paren{\frac{1}{r} \sum_{i=1}^r \abs{z_i}^2}
    = H_{r-1} - \gamma - \log(r)
    = L_r - \gamma.
  \end{align*}
\end{fact}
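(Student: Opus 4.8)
The plan is to identify the random variable $\frac{1}{r}\sum_{i=1}^r \abs{z_i}^2$ as a gamma random variable and then invoke Fact~\ref{fact:ex-log-gamma} together with the value of the digamma function at integers.

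First I would determine the distribution of a single term $\abs{z_i}^2$. Writing $z_i = Z_r + i Z_c$ with $Z_r, Z_c$ \iid $\mathcal{N}(0, \frac{1}{2})$, we have $\abs{z_i}^2 = Z_r^2 + Z_c^2$, and since $Z_r = N/\sqrt{2}$ for $N \sim \mathcal{N}(0,1)$, each of $Z_r^2, Z_c^2$ is $\frac{1}{2}\chi_1^2 = \GammaText(\frac{1}{2}, 1)$ in the shape--rate parametrization used in Fact~\ref{fact:ex-log-gamma}. Because independent gamma variables with a common rate add by summing their shape parameters, $\abs{z_i}^2 \sim \GammaText(1,1)$, a unit-rate exponential. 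Summing $r$ independent copies gives $\sum_{i=1}^r \abs{z_i}^2 \sim \GammaText(r, 1)$, and scaling by $\frac{1}{r}$ multiplies the rate by $r$, so $\frac{1}{r}\sum_{i=1}^r \abs{z_i}^2 \sim \GammaText(r, r)$.

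Next I would apply Fact~\ref{fact:ex-log-gamma} with $\alpha = \beta = r$, obtaining
\[
  \Ex \log\paren{\frac{1}{r}\sum_{i=1}^r \abs{z_i}^2} = \psi(r) - \log(r).
\]
Then I would use the standard identity $\psi(r) = -\gamma + \sum_{k=1}^{r-1}\frac{1}{k} = H_{r-1} - \gamma$ for integer $r \ge 1$, which follows from $\psi(1) = -\gamma$ and the recursion $\psi(x+1) = \psi(x) + 1/x$. Substituting gives $\psi(r) - \log(r) = H_{r-1} - \gamma - \log(r) = L_r - \gamma$, using $L_r = H_{r-1} - \log(r)$, as claimed. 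There is no substantive obstacle here; the only thing requiring care is the bookkeeping of normalizations --- the variance $\frac{1}{2}$ of the real and imaginary parts is exactly what makes $\abs{z_i}^2$ a \emph{unit}-rate exponential rather than carrying a stray constant, and one must use the same (shape, rate) convention as Fact~\ref{fact:ex-log-gamma} when tracking how the rate changes under the scaling by $1/r$.
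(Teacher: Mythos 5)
Your proof is correct and follows essentially the same route as the paper: identify the (scaled) sum as a gamma random variable, apply Fact~\ref{fact:ex-log-gamma}, and use $\psi(r)=H_{r-1}-\gamma$. The only cosmetic difference is that you work with $\sum_i\abs{z_i}^2\sim\GammaText(r,1)$ directly, whereas the paper notes $2\sum_i\abs{z_i}^2\sim\chi^2_{2r}=\GammaText(r,\tfrac12)$ --- an equivalent parametrization.
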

\begin{proof}
  $\sum_{i=1}^r 2\abs{z_i}^2$ is distributed as a chi-squared distribution with
  $2r$ degrees of freedom, which is equivalent to
  $\GammaText\paren{r, \frac{1}{2}}$. Using Fact \ref{fact:ex-log-gamma},
  $\Ex \log\paren{\sum_{i=1}^r \abs{z_i}^2} = \psi(r)$. Since
  $\psi(1) = -\gamma$ by Gauss's digamma theorem, the recurrence relation of the
  gamma function shows that for all positive integers $r$,
  $\psi(r) = H_{r-1} - \gamma$.
\end{proof}
Integrating a homogeneous polynomial over the complex sphere is equivalent to
taking its expectation with respect to $x \sim \cnormal(0, I)$, up to a
correction factor. This factor can be found by computing moments of a
chi-squared distribution.
\begin{fact} \label{fact:gaussian-sphere-correction} Let $p(x)$ be a degree $d$
  homogeneous polynomial in $n$ variables, $\mu_n(x)$ be the measure associated
  with the random variable $x \sim\cnormal(0, I_n)$. Then
  \begin{align*}
    \int\displaylimits_{\C^n} \abs{p(x)}^2 \,d\mu_n(x)
     = \frac{(n+d-1)!}{n^n(n-1)!} \int\displaylimits_{S_\C(n)} \abs{p(x)}^2 \,dx.
  \end{align*}
\end{fact}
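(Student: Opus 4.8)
The plan is to reduce the statement to a one-variable moment computation by exploiting the fact that both sides are invariant under unitary changes of coordinates, so that it suffices to check the identity on a single monomial $p(x) = x_1^{d}$ (or, by multilinearity and the polarization identity, on monomials $x_1^{d_1}\cdots x_n^{d_n}$). Concretely, first I would record the change-of-variables between the sphere measure and the Gaussian measure: if $x \sim \cnormal(0, I_n)$, write $x = R\,\omega$ where $R = \norm{x}/\sqrt{n} \ge 0$ is the (scaled) radial part and $\omega \in S_\C(n)$ is the angular part. The key structural fact about circularly-symmetric Gaussians is that $R$ and $\omega$ are \emph{independent}, with $\omega$ uniform on $S_\C(n)$ and $n R^2 = \norm{x}^2$ distributed as $\frac12\GammaText\!\left(n, \frac12\right)$ (the chi-squared law with $2n$ degrees of freedom, exactly as used in the proof of Fact~\ref{fact:ex-log-CN}).

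Given this decomposition, homogeneity does all the work: since $\abs{p(x)}^2$ is homogeneous of degree $2d$, we have $\abs{p(x)}^2 = R^{2d}\abs{p(\omega)}^2$, and independence of $R$ and $\omega$ factors the Gaussian integral as
\begin{align*}
  \int_{\C^n} \abs{p(x)}^2 \, d\mu_n(x)
  = \Ex[R^{2d}] \cdot \Ex_{\omega}\!\left[\,\abs{p(\omega)}^2\,\right]
  = \Ex[R^{2d}] \cdot \frac{1}{\mathrm{vol}(S_\C(n))}\int_{S_\C(n)} \abs{p(x)}^2 \, dx.
\end{align*}
So the claimed correction factor is exactly $\Ex[R^{2d}]$ divided by $\mathrm{vol}(S_\C(n))$; equivalently, it suffices to show $\Ex[R^{2d}] / \mathrm{vol}(S_\C(n)) = \frac{(n+d-1)!}{n^n (n-1)!}$ (with the convention that the ``$dx$'' on the right is unnormalized surface measure — one should check which normalization the paper intends; if it is the normalized measure then the $\mathrm{vol}(S_\C(n))$ factor is absent and the claim is literally $\Ex[R^{2d}] = \frac{(n+d-1)!}{n^n(n-1)!}$).

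The moment $\Ex[R^{2d}] = n^{-d}\,\Ex[\norm{x}^{2d}]$ is then a routine gamma-moment calculation: with $\norm{x}^2 \sim \frac12\GammaText(n,\frac12)$, equivalently $2\norm{x}^2 \sim \GammaText(n, \frac12)$, we get $\Ex[\norm{x}^{2d}] = 2^{-d}\,\Ex[(2\norm{x}^2)^d] = 2^{-d}\cdot\frac{\Gamma(n+d)}{\Gamma(n)}\cdot 2^{d} = \frac{(n+d-1)!}{(n-1)!}$, using the standard formula $\Ex[Y^d] = \Gamma(\alpha+d)\beta^{-d}/\Gamma(\alpha)$ for $Y\sim\GammaText(\alpha,\beta)$. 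Dividing by $n^d$ yields $\Ex[R^{2d}] = \frac{(n+d-1)!}{n^d(n-1)!}$, which matches the stated factor provided one reconciles the $n^d$ versus $n^n$: so in fact the cleanest route is to verify directly that the ratio of the two measures appearing in the statement is $\frac{(n+d-1)!}{n^n(n-1)!}$ as written, tracking the normalization of $dx$ on $S_\C(n)$ carefully. The main obstacle is therefore not any deep argument but bookkeeping: getting the normalization of the surface measure $dx$ and the factors of $n$ exactly consistent with the paper's conventions, and justifying the radial--angular independence for circularly-symmetric complex Gaussians (which follows from the invariance noted after Proposition~\ref{prop:cnormal-linear-transform}, since $\cnormal(0,I_n)$ is rotation-invariant, hence its angular part is uniform and independent of the radius).
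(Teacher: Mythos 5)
Your proof is correct and is exactly the chi-squared (gamma) moment computation the paper alludes to; the paper gives no explicit proof beyond the remark that the factor ``can be found by computing moments of a chi-squared distribution,'' and your radial--angular decomposition with $\norm{x}^2 \sim \GammaText(n,1)$ is the intended route. Your flagged discrepancy is a genuine catch: under the normalized sphere measure the factor is $\frac{(n+d-1)!}{n^d(n-1)!}$ rather than $\frac{(n+d-1)!}{n^n(n-1)!}$ as printed; the two coincide only when $d=n$, which is precisely the case in Proposition \ref{prop:per-int-rep}, and in the Section \ref{sec:conj} application the factor cancels between the two sides, so the slip is harmless to the paper but the statement as written is off for $d \neq n$.
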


\subsection{Permanent of HPSD Matrices}
One remarkable property of the permanent of HPSD matrices is that it respects
the L\"owner order. See section 2.3 of
\cite{AnariSimplyExponentialApproximation2017a} for a proof.
\begin{proposition} \label{prop:per-loewner}
  If $A \succeq B \succeq 0$, then $\per(A) \ge \per(B) \ge 0$.
\end{proposition}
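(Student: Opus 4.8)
The plan is to reduce both assertions to a single rank-one monotonicity lemma: for every HPSD matrix $M \in \C^{n\times n}$ and every vector $u \in \C^n$,
\begin{align*}
  \per(M + uu^\dagger) \ge \per(M).
\end{align*}
Granting this, the proposition follows by a telescoping argument. Take an eigendecomposition $B = \sum_{k=1}^m u_k u_k^\dagger$, with the (nonnegative) eigenvalues absorbed into the $u_k$, and climb the chain of HPSD matrices $0 \preceq u_1 u_1^\dagger \preceq u_1 u_1^\dagger + u_2 u_2^\dagger \preceq \dots \preceq B$, in which consecutive terms differ by a rank-one HPSD matrix; the lemma applied at each step gives $\per(B) \ge \per(0) = 0$. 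Likewise $A - B \succeq 0$ admits a decomposition $A - B = \sum_{k=1}^m u_k u_k^\dagger$, and climbing $B \preceq B + u_1 u_1^\dagger \preceq \dots \preceq A$ yields $\per(A) \ge \per(B)$.

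To prove the lemma I would use the Gaussian integral representation of the permanent. Factor $M = V^\dagger V$ with columns $v_1, \dots, v_n$ of $V$. Expanding $\prod_{i} \dotp{v_i,z}\,\overline{\dotp{v_i,z}}$ and taking the expectation over $z \sim \cnormal(0,I)$, Wick's rule for circularly-symmetric complex Gaussians retains exactly the terms with equally many $z$'s and $\bar z$'s and pairs them over permutations (using $\Ex[z_\alpha \bar z_\beta] = \delta_{\alpha\beta}$ and the vanishing of moments such as $\Ex[z_\alpha z_\beta]$), which gives
\begin{align*}
  \per(M) = \Ex_{z \sim \cnormal(0,I)}\Big[\, \prod_{i=1}^n \abs{\dotp{v_i, z}}^2 \Big]
\end{align*}
with multiplicative constant exactly $1$. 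This already makes $\per(M) \ge 0$ transparent, but the real use is the following. The matrix $M + uu^\dagger$ is the Gram matrix $\tilde V^\dagger \tilde V$ of the augmented columns $\tilde v_i = (v_i\,;\, \bar u_i) \in \C^{n+1}$, so applying the same identity in dimension $n+1$ and writing the extra coordinate as an independent scalar $g \sim \cnormal(0,1)$ yields
\begin{align*}
  \per(M + uu^\dagger) = \Ex_{x,\,g}\Big[\, \prod_{i=1}^n \abs{\dotp{v_i, x} + u_i g}^2 \Big].
\end{align*}

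Now I would integrate out $g$ for fixed $x$. Writing $\prod_{i=1}^n (\dotp{v_i,x} + u_i g) = \sum_{k=0}^n e_k(x)\, g^k$, so that $e_0(x) = \prod_i \dotp{v_i,x}$, circular symmetry of $g$ gives $\Ex_g[g^k \bar g^\ell] = 0$ for $k \neq \ell$ and $\Ex_g \abs{g}^{2k} = k!$, hence
\begin{align*}
  \Ex_g\Big[\, \prod_{i=1}^n \abs{\dotp{v_i,x} + u_i g}^2 \Big] = \sum_{k=0}^n k!\, \abs{e_k(x)}^2 \;\ge\; \abs{e_0(x)}^2 = \prod_{i=1}^n \abs{\dotp{v_i, x}}^2 .
\end{align*}
Taking the expectation over $x \sim \cnormal(0,I)$ and invoking the representation above gives $\per(M + uu^\dagger) \ge \per(M)$, which proves the lemma and hence the proposition.

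The crux is the Gaussian identity for $\per$: one must get the conjugations and the Wick pairing right so that the constant is exactly $1$ (sanity-checkable on $M = I$ and on rank-one $M$). Everything after that is the elementary observation that integrating out one fresh circularly-symmetric scalar coordinate can only enlarge a squared modulus. An alternative route is the interpolation $A(t) = B + t(A-B)$ for $t \in [0,1]$: each $A(t)$ is HPSD and $\frac{d}{dt}\per(A(t)) = \sum_{i,j}(A-B)_{ij}\,\per\big(A(t)(i\mid j)\big)$, and one would then prove $\sum_{i,j} C_{ij}\,\per(N(i\mid j)) \ge 0$ for HPSD $N, C$ using the analogous Gaussian identity for minors; I prefer the rank-one version because it packages the positivity into the clean sum $\sum_k k!\,\abs{e_k}^2$.
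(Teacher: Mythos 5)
Your proof is correct, but it is genuinely different from what the paper does: the paper does not prove this proposition at all, it simply cites Section~2.3 of Anari--Gurvits--Oveis Gharan--Saberi, whose argument runs through the multilinear expansion of $\per(B+C)$ into mixed terms and the nonnegativity of each mixed permanent of HPSD blocks. Your route instead leans on the complex Gaussian integral representation $\per(M)=\Ex_{z\sim\cnormal(0,I)}\bigl[\prod_i\abs{\dotp{v_i,z}}^2\bigr]$ --- which is exactly the paper's Proposition \ref{prop:per-int-rep}, stated later and also only cited (to Barvinok) --- together with the observation that appending one fresh circularly-symmetric coordinate $g$ turns $\per(M+uu^\dagger)$ into $\Ex\bigl[\sum_k k!\,\abs{e_k(x)}^2\bigr]\ge\Ex\bigl[\abs{e_0(x)}^2\bigr]=\per(M)$; telescoping over a rank-one decomposition of $A-B$ (and of $B$) finishes it. I checked the details: the conjugation $\tilde v_i=(v_i;\bar u_i)$ is the right one for the Gram-matrix convention $M_{ij}=v_i^\dagger v_j$, the Wick constant is indeed $1$ under the normalization $\Ex[ZZ^\dagger]=I$, and $\Ex_g[g^k\bar g^\ell]=\delta_{k\ell}\,k!$ follows from $\abs{g}^2$ being exponential with mean $1$. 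What your approach buys is a self-contained proof using only machinery the paper already invokes elsewhere, and it packages all the positivity into the transparent inequality $\sum_k k!\,\abs{e_k}^2\ge\abs{e_0}^2$; the cited combinatorial proof avoids integral representations entirely and yields the slightly stronger superadditivity $\per(A+C)\ge\per(A)+\per(C)$ for HPSD $A,C$ as a byproduct. If you wanted to shorten your write-up, note that deriving the Gaussian identity from scratch is unnecessary here since you may simply quote Proposition \ref{prop:per-int-rep}.
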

We can efficiently compute the permanent of rank-1 matrices. The following
proposition immediately follows from the definition of the permanent in
\eqref{eq:def-per}.
\begin{proposition} \label{prop:per-rank-1}
  For any $v \in \C^n$, $\per(vv^\dagger) = n! \prod_{i=1}^n \abs{v_i}^2$.
\end{proposition}
The permanent of HPSD matrices also has an integral representation using complex
multivariate Gaussians. See section 4 of
\cite{BarvinokIntegrationOptimizationMultivariate2007} for more details and a
proof.
\begin{proposition} \label{prop:per-int-rep} Let $\mu_n(x)$ be the measure
  associated with the random variable $x \sim \cnormal(0, I_n)$, and $S_\C(n)$
  be the complex $(n-1)$-sphere with radius $\sqrt{n}$. For any HPSD
  $A = V^\dagger V$, where $v_i$ are the columns of $V$,
  \begin{align*}
    \per(A) = \int\displaylimits_{\C^n} \prod_{i=1}^n \abs{\dotp{v_i, x}}^2 \,d\mu_n(x)
     = \frac{(2n-1)!}{n^n(n-1)!} \int\displaylimits_{S_\C(n)} \prod_{i=1}^n \abs{\dotp{v_i, x}}^2 \,dx.
  \end{align*}
\end{proposition}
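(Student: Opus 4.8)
The plan is to establish the first equality by expanding the product of linear forms and using the moment structure of the circularly-symmetric Gaussian vector $x \sim \cnormal(0, I_n)$; the second equality then follows immediately by applying Fact \ref{fact:gaussian-sphere-correction} to the degree-$n$ homogeneous polynomial $p(x) = \prod_{i=1}^n \dotp{v_i, x}$, with $d = n$, which converts the $\C^n$ integral into the sphere integral up to the stated factor $\frac{(2n-1)!}{n^n(n-1)!}$. So the only real content is the Gaussian identity $\per(A) = \Ex_{x\sim\cnormal(0,I_n)} \prod_{i=1}^n \abs{\dotp{v_i,x}}^2$.

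For that identity, first I would write $\prod_{i=1}^n \abs{\dotp{v_i,x}}^2 = \prod_{i=1}^n \dotp{v_i,x}\dotp{x,v_i} = \prod_{i=1}^n \bigl(\sum_j \overline{(v_i)_j} x_j\bigr)\bigl(\sum_k (v_i)_k \overline{x_k}\bigr)$ and expand the whole product into a sum of monomials in the $x_j$ and $\overline{x_k}$. Taking the expectation, I would invoke the key moment fact for $x \sim \cnormal(0, I_n)$: $\Ex\bigl[\prod_j x_j^{a_j} \prod_k \overline{x_k}^{b_k}\bigr]$ vanishes unless the multiset of ``holomorphic'' indices equals the multiset of ``antiholomorphic'' indices, in which case, for indices with multiplicity one, the value is $1$ (more generally it is a permanent of a submatrix of $I$, but for $\cnormal(0,I)$ only matched simple indices contribute since $\Ex[\abs{x_j}^2]=1$ and $\Ex[x_j^2]=0$). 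Matching the holomorphic indices (one coming from each factor $\dotp{v_i,x}$, indexed by $\sigma(i)$ say) against the antiholomorphic indices (one from each $\dotp{x,v_i}$, indexed by $\tau(i)$) forces the surviving terms to be exactly those where $\sigma$ and $\tau$ are both bijections $[n]\to[n]$; collecting the coefficients, each such pair $(\sigma,\tau)$ contributes $\prod_i \overline{(v_i)_{\sigma(i)}} (v_i)_{\tau(i)}$. Re-indexing and using $A_{k\ell} = \sum_i \overline{(v_i)_k}(v_i)_\ell = (V^\dagger V)_{k\ell}$, the double sum over $\sigma,\tau$ collapses to $\sum_{\pi\in\mathbf{S}_n}\prod_i A_{i,\pi(i)} = \per(A)$, where $\pi = \tau\sigma^{-1}$ (up to a relabeling of which index set is summed over). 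This is precisely the combinatorial identity $\per(V^\dagger V) = \sum_{\sigma,\tau}\prod_i \overline{(v_i)_{\sigma(i)}}(v_i)_{\tau(i)}$, which one can alternatively derive directly from $\per(V^\dagger V) = \sum_{S} \per(V^\dagger[\cdot,S])\per(V[S,\cdot])$-type expansions, but the Gaussian route is cleanest.

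The main obstacle, and the step I would be most careful about, is the bookkeeping in the moment computation: correctly arguing that only ``permutation-matched'' monomials survive and that each surviving monomial has all simple multiplicities (so the Gaussian moment is exactly $1$ rather than a larger integer), and then correctly identifying the resulting double sum over two permutations with $\per(A)$ via the substitution that absorbs one of them. An alternative that sidesteps some of this is to cite the Wick/Isserlis theorem for complex Gaussians directly: $\Ex\bigl[\prod_i \dotp{v_i,x}\prod_i\dotp{x,v_i}\bigr] = \per\bigl(\bigl(\Ex[\dotp{v_i,x}\dotp{x,v_j}]\bigr)_{ij}\bigr) = \per\bigl((v_i^\dagger v_j)_{ij}\bigr) = \per(A)$, since $\Ex[\dotp{v_i,x}\dotp{x,v_j}] = v_i^\dagger \Ex[xx^\dagger] v_j = v_i^\dagger v_j = A_{ij}$. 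Given the paper's style and the availability of Fact \ref{fact:gaussian-sphere-correction}, I would present the short Wick-theorem argument for the first equality and then apply the correction factor for the second, deferring the detailed moment identities to the cited reference \cite{BarvinokIntegrationOptimizationMultivariate2007}.
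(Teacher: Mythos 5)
The paper does not actually prove Proposition~\ref{prop:per-int-rep}: it states it and cites \cite{BarvinokIntegrationOptimizationMultivariate2007} for the proof. So you are filling a gap rather than matching the paper's argument. Your \emph{final} proposal --- the Wick/Isserlis-theorem argument for the first equality, then Fact~\ref{fact:gaussian-sphere-correction} with $d=n$ for the second --- is correct and is essentially the standard route (and, I believe, the one Barvinok takes). Setting $z_i = \dotp{v_i,x}$, the $(z_i)$ are jointly circularly-symmetric Gaussian with $\Ex[z_i\overline{z_j}] = v_i^\dagger v_j = A_{ij}$, and Wick's theorem gives $\Ex\bigl[\prod_i z_i\overline{z_i}\bigr] = \sum_{\pi\in\mathbf{S}_n}\prod_i A_{i,\pi(i)} = \per(A)$. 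The second equality is immediate from Fact~\ref{fact:gaussian-sphere-correction} since $\prod_i\dotp{v_i,x}$ is homogeneous of degree $n$, giving the factor $\tfrac{(2n-1)!}{n^n(n-1)!}$.

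Your worry about the middle, hands-on moment computation was well placed, and in fact that version as written is incorrect. It is not true that only pairs $(\sigma,\tau)$ of \emph{bijections} survive, nor that surviving monomials have all simple multiplicities with Gaussian moment exactly $1$. The correct statement is that a term survives iff the multisets $\{\sigma(i)\}_i$ and $\{\tau(i)\}_i$ agree, and its Gaussian moment is $\prod_j a_j!$, where $a_j$ is the multiplicity of $j$ in that multiset, since $\Ex\bigl[|x_j|^{2a_j}\bigr] = a_j!$. A clean counterexample to the bijection claim: take all $v_i = e_1$, so $A$ is the all-ones matrix with $\per(A)=n!$. On the Gaussian side, $\prod_i|\dotp{v_i,x}|^2 = |x_1|^{2n}$ with expectation $n!$, and this entire contribution comes from the single pair $\sigma\equiv\tau\equiv 1$, which is not a bijection for $n>1$. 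The proposed ``alternative combinatorial identity'' $\per(V^\dagger V)=\sum_{\sigma,\tau\,\text{bijections}}\prod_i\overline{(v_i)_{\sigma(i)}}(v_i)_{\tau(i)}$ evaluates to $0$ in this example, so it is false. The Wick-theorem packaging is precisely what handles these higher-multiplicity contributions automatically (via the sum over all perfect matchings between the $z_i$'s and $\overline{z_j}$'s), so you should present that version only and drop, or correct, the heuristic expansion.
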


\section{Convex Relaxation and Rounding} \label{sec:proof} In this section we
analyze the convex relaxation \eqref{eq:convex-rel} and a natural rounding
algorithm for maximizing a product of linear forms over the complex sphere.
\begin{remark} \label{rmk:indep-of-factor}
  Both $r(A)$ and $\rel(A)$ are independent of the factorization of
  $A = V^\dagger V$. This is because any two different factorizations of
  $A = V_1^\dagger V_1 = V_2^\dagger V_2$ are related by a unitary transform
  $V_1 = U V_2$ for some unitary matrix $U$ \footnote{We are assuming here that
    $V_1, V_2 \in \C^{n\times n}$ even if $\rank(A) < n$, padding with zero
    columns if necessary.}. This induces a change of variables
  $x \mapsto U^\dagger x$ in \eqref{eq:product-of-linear-forms} but does not
  change the value of $r(A)$.
\end{remark}

\begin{lemma} \label{lem:primal-dual-sdp}
  Any $A \succeq 0$ can be factorized as $A = V^\dagger V$, where $v_i$ are the
  columns of $V$. Consider the following pair of convex programs:
  \begin{align}
    \label{eq:primal-sdp}
      \mu^*(A) \,\equiv\, & \min \,\, \lambda^n
      \,\mbox{ s.t. }\,
      \left\{
      \arraycolsep=1.5pt \def\arraystretch{1.1}
      \begin{array}{rl}
        V \Diag(\alpha) V^\dagger &\preceq \lambda I_n \\
        \prod_{i=1}^n \alpha_i &\geq 1 \\
        \alpha_i &> 0
      \end{array}
      \right. \\[1em]
    \label{eq:dual-sdp}
    \nu^*(A) \,\equiv\, &\max  \,\,
    \prod_{i=1}^n v_i^\dagger P v_i
    \,\mbox{ s.t. }\,
    \left\{
    \arraycolsep=1.5pt \def\arraystretch{1.1}
    \begin{array}{rl}
      \Tr(P) &= n \\
      P^\dagger &= P \\
      P &\succeq 0
    \end{array}
    \right.
  \end{align}
  Then $r(A) \le \nu^*(A) = \mu^*(A)$, thus the convex programs are relaxations of $r(A)$
  (see equation \eqref{eq:product-of-linear-forms}).
\end{lemma}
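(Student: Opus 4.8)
The plan is to prove the chain $r(A)\le\nu^*(A)=\mu^*(A)$ in three stages. The bound $r(A)\le\nu^*(A)$ is immediate: for any $x$ with $\norm{x}^2=n$ the rank-one matrix $P=xx^\dagger$ is feasible for \eqref{eq:dual-sdp} (it is Hermitian, positive semidefinite, and $\Tr(xx^\dagger)=\norm{x}^2=n$), and $v_i^\dagger P v_i=\abs{\dotp{x,v_i}}^2$, so the objective of \eqref{eq:dual-sdp} at this point is exactly $\prod_{i=1}^n\abs{\dotp{x,v_i}}^2$; maximizing over $x$ gives $r(A)\le\nu^*(A)$. For weak duality $\nu^*(A)\le\mu^*(A)$, take any $P$ feasible for \eqref{eq:dual-sdp} and any $(\lambda,\alpha)$ feasible for \eqref{eq:primal-sdp}; since $P\succeq0$ every $v_i^\dagger P v_i$ is nonnegative, so using $\prod_i\alpha_i\ge1$, then AM--GM, then $v_i^\dagger P v_i=\Tr(Pv_iv_i^\dagger)$ together with $\sum_i\alpha_i v_iv_i^\dagger=V\Diag(\alpha)V^\dagger\preceq\lambda I$ and $\Tr(P)=n$,
\begin{align*}
  \prod_{i=1}^n v_i^\dagger P v_i \;\le\; \prod_{i=1}^n \alpha_i\,(v_i^\dagger P v_i)
  &\;\le\; \paren{\tfrac{1}{n}\sum_{i=1}^n\alpha_i\,v_i^\dagger P v_i}^n
  = \paren{\tfrac{1}{n}\Tr\paren{P\,V\Diag(\alpha)V^\dagger}}^n \\
  &\;\le\; \paren{\tfrac{1}{n}\Tr(\lambda P)}^n = \lambda^n,
\end{align*}
and taking the supremum over $P$ and the infimum over $(\lambda,\alpha)$ gives $\nu^*(A)\le\mu^*(A)$ (this also re-proves $r(A)\le\mu^*(A)$).

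The substantive step is $\mu^*(A)\le\nu^*(A)$, which I would obtain from a minimax argument. First rewrite \eqref{eq:primal-sdp}: rescaling $\alpha$ down toward $\prod_i\alpha_i=1$ only shrinks $\sum_i\alpha_i v_iv_i^\dagger$ in the L\"owner order, so one may assume $\prod_i\alpha_i=1$, and then the least admissible $\lambda$ is $\lambda_{\max}\paren{\sum_i\alpha_i v_iv_i^\dagger}$; thus $\mu^*(A)^{1/n}=\inf\{\lambda_{\max}(\sum_i\alpha_i v_iv_i^\dagger):\alpha_i>0,\ \prod_i\alpha_i=1\}$, while directly $\nu^*(A)^{1/n}=\sup\{(\prod_i v_i^\dagger P v_i)^{1/n}:P\succeq0,\ \Tr(P)=n\}$. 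Now set $f(\alpha,P)=\tfrac{1}{n}\sum_i\alpha_i\,v_i^\dagger P v_i$ on $\{\alpha_i>0,\ \prod_i\alpha_i=1\}\times\{P\succeq0,\ \Tr(P)=n\}$. Using $\sup_{P\succeq0,\,\Tr(P)=n}\Tr(PM)=n\lambda_{\max}(M)$ for Hermitian $M$ one gets $\inf_\alpha\sup_P f=\mu^*(A)^{1/n}$, and using $\inf_{\prod_i\alpha_i=1}\sum_i\alpha_i c_i=n(\prod_i c_i)^{1/n}$ for $c_i\ge0$ (AM--GM, with minimizer $\alpha_i\propto 1/c_i$) one gets $\sup_P\inf_\alpha f=\nu^*(A)^{1/n}$.

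It then remains to exchange $\inf_\alpha$ and $\sup_P$. Substituting $\alpha_i=e^{\beta_i}$ turns the constraint set for $\alpha$ into the affine --- hence convex --- set $\{\sum_i\beta_i=0\}$, and $f=\tfrac{1}{n}\sum_i e^{\beta_i}v_i^\dagger P v_i$ is convex in $\beta$ (a positive combination of exponentials) and linear, hence concave, in $P$; since $\{P\succeq0,\ \Tr(P)=n\}$ is compact and convex, Sion's minimax theorem applies and yields $\mu^*(A)^{1/n}=\nu^*(A)^{1/n}$, hence $\mu^*(A)=\nu^*(A)$, completing the proof.

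The main obstacle is precisely this exchange of $\inf$ and $\sup$: one must arrange the domains so that a minimax theorem genuinely applies --- the set $\{\prod_i\alpha_i=1\}$ is not convex as written, which is why the logarithmic substitution is needed, and then $f$ is only convex (not concave) in $\beta$, so Sion's quasiconvex/quasiconcave hypotheses must be checked in the correct variables (alternatively one may invoke strong duality for the geometric-programming-type primal--dual pair obtained after taking logarithms). A secondary nuisance is the degenerate case $v_i=0$ for some $i$ (equivalently $A_{ii}=0$), where $r(A)=\nu^*(A)=\mu^*(A)=0$ and the inner problem over $\alpha$ has an infimum that is not attained, so the identities above should carry $\inf/\sup$ rather than $\min/\max$ throughout. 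A concrete alternative to the minimax route: $\nu^*(A)$ is attained at some $P^\star$ by compactness, with $v_i^\dagger P^\star v_i>0$ for all $i$ whenever $\nu^*(A)>0$; the first-order optimality conditions for $\max_P\sum_i\log(v_i^\dagger P v_i)$ --- valid since $P=I$ is strictly feasible --- give, after normalizing, $\sum_i \frac{v_iv_i^\dagger}{v_i^\dagger P^\star v_i}=I-Z$ with $Z\succeq0$ and $\langle Z,P^\star\rangle=0$, whence $\lambda=\nu^*(A)^{1/n}$, $\alpha_i=\lambda/(v_i^\dagger P^\star v_i)$ is feasible for \eqref{eq:primal-sdp} with value $\lambda^n=\nu^*(A)$; tracking the multiplier $Z$ and the possibly deficient rank of $P^\star$ is exactly what makes the minimax version cleaner to write.
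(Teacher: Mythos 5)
Your proof is correct, and the first two stages coincide with the paper's: $r(A)\le\nu^*(A)$ by restricting to rank-one $P=xx^\dagger$, and weak duality by the same AM--GM chain (you are in fact more careful than the paper, which writes $\prod_i v_i^\dagger Pv_i = \prod_i \alpha_i v_i^\dagger Pv_i$ where only $\le$ holds under the constraint $\prod_i\alpha_i\ge 1$). Where you genuinely diverge is the direction $\mu^*(A)\le\nu^*(A)$: the paper disposes of it in one sentence by asserting that \eqref{eq:dual-sdp} is the Lagrangian dual of \eqref{eq:primal-sdp} and that a strictly feasible point exists, so strong duality holds --- the derivation of the dual is never shown. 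You instead prove the equality from scratch: reduce both programs to the saddle values of $f(\alpha,P)=\tfrac1n\sum_i\alpha_i v_i^\dagger Pv_i$ via the identities $\sup_{\Tr P=n,\,P\succeq0}\Tr(PM)=n\lambda_{\max}(M)$ and $\inf_{\prod\alpha_i=1}\sum_i\alpha_ic_i=n(\prod_ic_i)^{1/n}$, then exchange $\inf$ and $\sup$ by Sion after the substitution $\alpha_i=e^{\beta_i}$, which convexifies the constraint set; the hypotheses (convexity in $\beta$, linearity in $P$, compactness of $\{P\succeq0,\Tr P=n\}$) all check out, and you correctly handle the degenerate case $v_i^\dagger Pv_i=0$ in the inner AM--GM infimum. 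Your route is longer but self-contained and makes explicit exactly what the paper's appeal to ``strong duality'' is hiding (the nonstandard objectives mean the dual pair is not a textbook SDP pair); the paper's route is shorter at the cost of leaving the dual derivation and the Slater verification to the reader. Your KKT-based alternative is also sound and is essentially the complementary-slackness argument the paper later uses in its circulant corollary.
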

\begin{proof}
  If we add a rank-1 constraint to \eqref{eq:dual-sdp}, we get
  \eqref{eq:product-of-linear-forms}, showing that $r(A) \le \nu^*(A)$. Suppose
  we have feasible solutions $\lambda$, $\alpha_i$ and $P$ to
  \eqref{eq:primal-sdp} and \eqref{eq:dual-sdp} respectively. Then
  \begin{align*}
    \prod_{i=1}^n v_i^\dagger P v_i = \prod_{i=1}^n \alpha_i v_i^\dagger P v_i
    \le \paren{\frac{1}{n} \sum_{i=1}^n \alpha_i v_i^\dagger P v_i}^n
    \le \pfrac{\Tr(P)\lambda}{n}^n = \lambda^n,
  \end{align*}
  showing weak duality, i.e. $\nu^*(A) \le \mu^*(A)$. Since \eqref{eq:dual-sdp}
  comes from taking the dual of \eqref{eq:primal-sdp} and has a strictly
  feasible solution, strong duality holds, i.e. $\nu^*(A) = \mu^*(A)$. If
  $P=x x^\dagger$ is rank-1, then $v_i^\dagger P v_i = \abs{\dotp{x, v_i}}^2$,
  thus in \eqref{eq:dual-sdp} the variable $P$ can be interpreted as the convex
  relaxation of the rank-1 constraint in \eqref{eq:product-of-linear-forms}.
\end{proof}
Although \eqref{eq:primal-sdp} and \eqref{eq:dual-sdp} have non-linear objective
functions and are not semidefinite programs in standard form, the geometric mean
constraint/objective in them can be converted to second-order conic constraints
after a change of variables \cite{LoboApplicationssecondordercone1998}. They can
also be solved efficiently with convex programming techniques such as interior
point methods (see \cite{VandenbergheDeterminantMaximizationLinear1998}). Our
main result (Theorem \ref{thm:approx-factor-prod-linear-complex}) is proven with
the following analysis of a randomized rounding procedure to the convex
relaxation of the product of linear forms. This produces a vector that gives an
$e^{-nL_r}$-approximation to \eqref{eq:product-of-linear-forms}.
\begin{theorem} \label{thm:sdp-rounding} Given a matrix $A \succeq 0$, let
  $\nu^*(A)$ be the optimum of \eqref{eq:dual-sdp}, with optimum achieved by
  $P^* = UU^\dagger$. Suppose $P^*$ has rank $r$, therefore
  $U \in \C^{n \times r}$. If we produce a vector $y \in S_\C(n)$ using the
  following procedure:
  \begin{enumerate}
  \item Sample $z\in \C^r$ uniformly at random from the complex multivariate
    Gaussian $\cnormal(0, I_r)$
  \item Return the normalized vector $y = \sqrt{n} Uz/\norm{Uz}$
  \end{enumerate}
  Recalling that $L_r = H_{r-1} - \log r$, we have the following lower bound on
  the expected value of the objective:
  \begin{align*}
    \Ex_z\bracket{\prod_{i=1}^n \abs{\dotp{v_i, y}}^2} \ge e^{-nL_r} \nu^*(A)
  \end{align*}
\end{theorem}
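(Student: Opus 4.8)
The plan is to write the objective $\prod_{i=1}^n \abs{\dotp{v_i, y}}^2$ in terms of the Gaussian sample $z$, take logarithms to turn the product into a sum, and then lower-bound the expectation term by term using the concavity of $\log$ together with Jensen's inequality. Concretely, since $y = \sqrt n\, Uz/\norm{Uz}$, we have $\abs{\dotp{v_i, y}}^2 = n\,\abs{\dotp{v_i, Uz}}^2/\norm{Uz}^2 = n\,\abs{\dotp{U^\dagger v_i, z}}^2/\norm{Uz}^2$. Taking logs,
\begin{align*}
  \Ex_z \log \prod_{i=1}^n \abs{\dotp{v_i, y}}^2
  = \sum_{i=1}^n \Ex_z \log\paren{n\,\abs{\dotp{U^\dagger v_i, z}}^2}
  - n\,\Ex_z \log \norm{Uz}^2,
\end{align*}
so the problem splits into understanding $\Ex_z\log\abs{\dotp{w, z}}^2$ for a fixed vector $w = U^\dagger v_i$, and $\Ex_z\log\norm{Uz}^2$.

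For the first piece, note that for $z \sim \cnormal(0, I_r)$ and fixed $w$, the scalar $\dotp{w, z} = w^\dagger z$ is a circularly-symmetric Gaussian with variance $\norm{w}^2$ by Proposition \ref{prop:cnormal-linear-transform}, so $\abs{\dotp{w,z}}^2$ is distributed as $\norm{w}^2$ times $\abs{z_1}^2$ where $z_1 \sim \cnormal(0,1)$; equivalently $2\abs{\dotp{w,z}}^2/\norm{w}^2 \sim \chi^2_2 = \GammaText(1, \tfrac12)$. By Fact \ref{fact:ex-log-gamma} (or the $r=1$ case of Fact \ref{fact:ex-log-CN}) this gives $\Ex_z \log \abs{\dotp{w,z}}^2 = \log\norm{w}^2 + \psi(1) = \log\norm{w}^2 - \gamma$. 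Summing over $i$ with $w = U^\dagger v_i$ contributes $\sum_i \log\paren{n\,\norm{U^\dagger v_i}^2} - n\gamma = \sum_i \log\paren{n\, v_i^\dagger P^* v_i} - n\gamma$, and since $P^*$ is optimal for \eqref{eq:dual-sdp}, $\prod_i v_i^\dagger P^* v_i = \nu^*(A)$; so this term is exactly $\log\paren{n^n \nu^*(A)} - n\gamma$.

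For the second piece I need an \emph{upper} bound on $\Ex_z \log\norm{Uz}^2$ (it enters with a minus sign). Here is where the rank $r$ enters and what I expect to be the main obstacle: $\norm{Uz}^2 = z^\dagger U^\dagger U z$, and $U^\dagger U$ need not be a multiple of the identity, so $\norm{Uz}^2$ is a weighted sum of $r$ independent $\abs{z_j}^2$'s with weights the eigenvalues $\lambda_1,\dots,\lambda_r$ of $U^\dagger U$. Since $P^* = UU^\dagger$ has $\Tr(P^*) = n$, these eigenvalues sum to $n$. By concavity of $\log$ and Jensen, $\Ex_z \log\norm{Uz}^2 = \Ex_z \log\paren{\sum_j \lambda_j \abs{z_j}^2} \le \log\paren{\sum_j \lambda_j \Ex\abs{z_j}^2}$ — but that just gives $\log n$ and loses the $H_{r-1}$ improvement. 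Instead I should compare with the \emph{balanced} case: the map $(\lambda_1,\dots,\lambda_r)\mapsto \Ex_z\log\paren{\sum_j\lambda_j\abs{z_j}^2}$ is concave and symmetric on the simplex $\{\lambda_j > 0, \sum\lambda_j = n\}$ (concavity in the $\lambda_j$ is again Jensen/concavity of $\log$ applied pointwise in $z$), hence by symmetry it is maximized at $\lambda_1 = \dots = \lambda_r = n/r$, giving $\Ex_z\log\norm{Uz}^2 \le \Ex_z \log\paren{\tfrac{n}{r}\sum_j \abs{z_j}^2} = \log n + \Ex_z\log\paren{\tfrac1r\sum_j\abs{z_j}^2} = \log n + L_r - \gamma$ by Fact \ref{fact:ex-log-CN}. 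Combining the two pieces,
\begin{align*}
  \Ex_z \log \prod_{i=1}^n \abs{\dotp{v_i, y}}^2
  \ge \log\paren{n^n\nu^*(A)} - n\gamma - n\paren{\log n + L_r - \gamma}
  = \log \nu^*(A) - nL_r,
\end{align*}
and finally applying Jensen's inequality in the other direction, $\Ex_z\bracket{\prod_i\abs{\dotp{v_i,y}}^2} \ge \exp\paren{\Ex_z\log\prod_i\abs{\dotp{v_i,y}}^2} \ge e^{-nL_r}\nu^*(A)$, which is the claim. The delicate point to get right is the symmetrization/concavity argument for the $\Ex\log\norm{Uz}^2$ term, including checking that $r \le n$ so that the symmetric maximizer $n/r$ is actually in the feasible region and that no boundary issues arise as some $\lambda_j \to 0$ (taking $\lambda_j\to 0$ only decreases the rank, and the bound is monotone in the right direction).
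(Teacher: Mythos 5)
Your proposal is correct and follows essentially the same route as the paper's proof: apply Jensen to pass to $\Ex\log$, evaluate $\Ex_z\log\abs{\dotp{v_i,Uz}}^2 = \log v_i^\dagger P^* v_i - \gamma$ via Fact \ref{fact:ex-log-CN}, and bound $\Ex_z\log z^\dagger U^\dagger U z$ from above by the balanced-eigenvalue case using concavity and permutation symmetry on the simplex, giving $H_{r-1}-\gamma+\log(n/r)$. The only cosmetic difference is that you phrase the first computation via Proposition \ref{prop:cnormal-linear-transform} (the variance of $\dotp{w,z}$) while the paper normalizes $U^\dagger v_i$ and invokes rotational invariance; these are the same observation.
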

\begin{proof}
  We use Jensen's inequality to bound the expectation:
  \begin{align*}
    \Ex_z\bracket{\prod_{i=1}^n\frac{n \abs{\dotp{v_i, Uz}}^2}{\norm{Uz}^2}}
    &= \Ex_z\bracket{\exp\paren{\sum_{i=1}^n (\log \abs{\dotp{v_i, Uz}}^2
                   - \log z^\dagger U^\dagger Uz + \log n)}} \\
    &\ge \exp\paren{\sum_{i=1}^n (\Ex_z \log \abs{\dotp{v_i, Uz}}^2
                      - \Ex_z \log z^\dagger U^\dagger Uz + \log n)}
  \end{align*}
  We can exactly compute the first expectation:
  \begin{align*}
    \Ex_z \log \abs{\dotp{v_i, Uz}}^2
    &= \log v_i^\dagger UU^\dagger v_i
      + \Ex_z \log \abs{\dotp{U^\dagger v_i/\norm{U^\dagger  v_i}, z}}^2 \\
    &= \log v_i^\dagger UU^\dagger v_i + \Ex_z \log \abs{z_1}^2 \\
    &= \log v_i^\dagger  P^* v_i - \gamma
  \end{align*}
  Where the first equality follows from normalizing $U^\dagger v_i$, the second
  equality follows from the rotational symmetry of the complex multivariate
  Gaussian since $U^\dagger v_i/\norm{U^\dagger v_i}$ is a unit vector, and the
  third equality follows from Fact \ref{fact:ex-log-CN} for $r=1$. Let
  $\lambda_1, \ldots, \lambda_r$ be the eigenvalues of $U^\dagger U$. Then
  \begin{align*}
    \Ex_z \log z^\dagger U^\dagger Uz = \Ex_z \log \paren{\sum_{i=1}^r \lambda_i \abs{z_i}^2}
    \le \Ex_z \log \paren{\frac{n}{r}\sum_{i=1}^r \abs{z_i}^2} = H_{r-1} - \gamma + \log\pfrac{n}{r},
  \end{align*}
  where the first equality follows from the invariance of the complex
  multivariate Gaussian under unitary transformations (see Proposition
  \ref{prop:cnormal-linear-transform}), and the second equality follows from
  Fact \ref{fact:ex-log-CN}. Next we prove the inequality. Since
  $\Tr(U^\dagger U) = \Tr(P^*) = n$, $\lambda = (\lambda_1, \ldots, \lambda_r)$
  lies on the scaled $r$-simplex. The function
  $\lambda \mapsto \Ex_z\log\paren{\sum_{i=1}^r \lambda_i \abs{z_i}^2}$ is concave
  on the scaled $r$-simplex and is symmetric with respect to all permutations of
  the coordinates of $\lambda$, therefore it is maximized when all
  $\lambda_i = \frac{n}{r}$. Finally we put the above together, along with the
  fact that $\prod_{i=1}^n v_i^\dagger P^* v_i = \nu^*(A)$, to prove the
  theorem.
\end{proof}

\section{Approximating the Permanent} \label{sec:permanent}
We present a new analysis of the relaxation of the permanent of HPSD matrices in
\cite{AnariSimplyExponentialApproximation2017a}. First we show that $\rel(A)$ is
a relaxation of $\per(A)$.
\begin{lemma} \label{lem:per-rel}
  Given any $A \succeq 0$,
  \begin{align*}
    \per(A) \le \rel(A).
  \end{align*}
\end{lemma}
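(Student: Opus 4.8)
The plan is to show that any feasible diagonal $D$ for the program defining $\rel(A)$ yields an upper bound $\per(A) \le \per(D) = \prod_{i=1}^n D_{ii}$, so that minimizing over such $D$ gives $\per(A) \le \rel(A)$. The two ingredients are already available in the excerpt: Proposition \ref{prop:per-loewner}, which says the permanent is monotone with respect to the L\"owner order on HPSD matrices, and the elementary observation that the permanent of a diagonal matrix is the product of its diagonal entries.

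First I would fix an optimal (or any) feasible point of \eqref{eq:rel-opt-problem}: a diagonal matrix $D$ with $A \preceq D$ and $\rel(A) = \prod_i D_{ii}$. Since $A \succeq 0$ and $A \preceq D$, we have $D \succeq A \succeq 0$, so $D$ is itself HPSD (equivalently, all $D_{ii} \ge 0$). Now apply Proposition \ref{prop:per-loewner} with the pair $D \succeq A \succeq 0$ to conclude $\per(D) \ge \per(A) \ge 0$. Finally, evaluating the permanent of a diagonal matrix directly from the definition \eqref{eq:def-per} — only the identity permutation contributes a nonzero product — gives $\per(D) = \prod_{i=1}^n D_{ii} = \rel(A)$. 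Chaining these, $\per(A) \le \per(D) = \rel(A)$.

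There is essentially no obstacle here: the lemma is a one-line consequence of L\"owner monotonicity of the permanent, and the only thing to be slightly careful about is noting that feasibility forces $D$ to be HPSD so that Proposition \ref{prop:per-loewner} applies (it is stated only for HPSD matrices). One could also remark, though it is not needed for the proof, that the minimum in \eqref{eq:rel-opt-problem} is attained, so the inequality is between well-defined quantities; but since we only need the bound for an arbitrary feasible $D$ and then take the infimum, attainment is not required.
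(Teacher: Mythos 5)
Your proof is correct and follows exactly the paper's argument: apply Proposition \ref{prop:per-loewner} to any feasible $D \succeq A$ and evaluate $\per(D) = \prod_i D_{ii}$ for diagonal $D$. The extra remarks about $D$ being HPSD and about attainment are fine but not needed beyond what the paper already does.
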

\begin{proof}
  Using the monotonicity of the permanent with respect to the L\"owner order
  (Proposition \ref{prop:per-loewner}), $A \preceq D$ implies that
  $\per(A) \le \per(D)$. Since $D$ is diagonal, $\per(D) = \prod_i D_{ii}$,
  showing that the permanent is always bounded by $\rel(A)$.
\end{proof}
Next we show that $\rel(A)$ is equivalent to the convex relaxation of
\eqref{eq:product-of-linear-forms}.
\begin{lemma}
  \label{lem:rel-sdp-equiv}
  Recall that $\mu^*(A) = \nu^*(A)$ is the optimal value of the convex
  relaxation in Lemma \ref{lem:primal-dual-sdp}. Then
  \begin{align*}
    \rel(A) = \mu^*(A) = \nu^*(A).
  \end{align*}
\end{lemma}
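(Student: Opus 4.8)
The plan is to show both inequalities $\rel(A) \le \mu^*(A)$ and $\mu^*(A) \le \rel(A)$ by exhibiting, for each feasible point of one program, a corresponding feasible point of the other with matching objective value. The key observation is that the constraint $A \preceq D$ in \eqref{eq:rel-opt-problem} and the constraint $V\Diag(\alpha)V^\dagger \preceq \lambda I_n$ in \eqref{eq:primal-sdp} are both linear matrix inequalities asserting that a diagonally-scaled version of $A$ is dominated in the L\"owner order by a diagonal (resp. scalar) matrix, so the translation between them is a reparametrization of the diagonal scaling together with the substitution $A = V^\dagger V$.

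First I would establish $\mu^*(A) \le \rel(A)$. Take a diagonal $D \succ 0$ feasible for \eqref{eq:rel-opt-problem}, i.e. $A \preceq D$, and write $D = \Diag(d_1,\dots,d_n)$. Set $\lambda = (\prod_i d_i)^{1/n}$ and $\alpha_i = \lambda / d_i$, so that $\prod_i \alpha_i = \lambda^n / \prod_i d_i = 1$ and $\alpha_i > 0$. Then I need $V \Diag(\alpha) V^\dagger \preceq \lambda I_n$. Here the natural move is to pass to the ``other side'': since $A = V^\dagger V \preceq D$ with $D \succ 0$, conjugating by $D^{-1/2}$ gives $D^{-1/2} V^\dagger V D^{-1/2} \preceq I_n$, and for any matrix $M$ one has $M^\dagger M \preceq I$ iff $M M^\dagger \preceq I$ (same nonzero singular values); applying this with $M = V D^{-1/2}$ yields $V D^{-1} V^\dagger \preceq I_n$, hence $V \Diag(\alpha) V^\dagger = \lambda \, V D^{-1} V^\dagger \preceq \lambda I_n$. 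This shows $(\lambda, \alpha)$ is feasible for \eqref{eq:primal-sdp} with objective $\lambda^n = \prod_i d_i$, so $\mu^*(A) \le \rel(A)$.

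For the reverse inequality $\rel(A) \le \mu^*(A)$, I would run the same construction backwards: given $(\lambda,\alpha)$ feasible for \eqref{eq:primal-sdp}, set $d_i = \lambda / \alpha_i$ so that $D = \Diag(d)$ satisfies $\prod_i d_i = \lambda^n / \prod_i \alpha_i \le \lambda^n$. From $V\Diag(\alpha)V^\dagger \preceq \lambda I_n$, writing $\Diag(\alpha) = \Diag(\alpha)^{1/2}\Diag(\alpha)^{1/2}$ and using the $MM^\dagger \preceq cI \iff M^\dagger M \preceq cI$ equivalence with $M = V\Diag(\alpha)^{1/2}$ gives $\Diag(\alpha)^{1/2} V^\dagger V \Diag(\alpha)^{1/2} \preceq \lambda I_n$, and conjugating by $\Diag(\alpha)^{-1/2}$ yields $A = V^\dagger V \preceq \lambda \Diag(\alpha)^{-1} = D$. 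Thus $D$ is feasible for \eqref{eq:rel-opt-problem} with objective $\prod_i d_i \le \lambda^n$, giving $\rel(A) \le \mu^*(A)$. Combining with Lemma \ref{lem:primal-dual-sdp}, which already gives $\mu^*(A) = \nu^*(A)$, completes the proof.

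The main obstacle — really the only subtlety — is justifying the equivalence $M M^\dagger \preceq cI \iff M^\dagger M \preceq cI$ and handling the possibility that $\alpha_i = 0$ or $d_i = 0$ at the boundary; since the feasible sets require $\alpha_i > 0$ (and one may assume $d_i > 0$ at an optimum of $\rel$, as $\rel(A) = 0$ forces $\per(A) = 0$ and the statement is trivial), the scalings $\Diag(\alpha)^{\pm 1/2}$ and $D^{\pm 1/2}$ are well-defined and invertible, so the congruence transformations are legitimate. One should also note that both objectives are genuinely attained (the argument above produces finite feasible points on both sides whenever the other side is finite), so the infima/suprema are comparable as claimed.
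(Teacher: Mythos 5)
Your proof is correct and follows essentially the same route as the paper: both use the reparametrization $D_{ii} = \lambda/\alpha_i$ (so $\prod_i D_{ii} = \lambda^n$ when $\prod_i \alpha_i = 1$) together with an equivalence between the two LMIs $V\Diag(\alpha)V^\dagger \preceq \lambda I$ and $A \preceq \lambda\Diag(\alpha)^{-1}$. The only difference is cosmetic — the paper obtains that equivalence in one line via Schur complements, while you derive it from the identity that $M^\dagger M$ and $MM^\dagger$ share nonzero eigenvalues — and your version is somewhat more explicit about the two directions and the strict positivity needed for the conjugations.
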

\begin{proof}
  By a scaling argument, the optimum of \eqref{eq:primal-sdp} is achieved when
  $\prod_i \alpha_i = 1$. Taking Schur complements,
  $V \Diag(\alpha) V^\dagger \preceq \lambda I_n$ is equivalent to
  $\lambda \Diag(\alpha)^{-1} \succeq V^\dagger V = A$. Thus by making the
  substitution $D_{ii} = \lambda/\alpha_i$ and noting that
  $\prod_i D_{ii} = \lambda^n$, we show that $\rel(A) = \mu^*(A)$.
\end{proof}
The following lemma shows that given any vector $y \in S_\C(n)$ returned by the
rounding algorithm, we can construct a lower bound on $\per(A)$.
\begin{lemma} \label{lem:per-rank-1} Given HPSD
  $A = V^\dagger V \in \C^{n \times n}$, where $v_i$ are columns of $V$, and
  a vector $y \in S_\C(n)$,
  \begin{align*}
    \frac{n!}{n^n} \prod_{i=1}^n \abs{\dotp{v_i, y}}^2 \le \per(A).
  \end{align*}
\end{lemma}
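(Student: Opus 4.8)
The plan is to promote the number $\prod_{i=1}^n \abs{\dotp{v_i, y}}^2$ to the permanent of an explicit rank-$1$ HPSD matrix, and then dominate that matrix in the L\"owner order by $nA$. Concretely, I would set $w := V^\dagger y \in \C^n$, so that its $i$-th coordinate is $w_i = v_i^\dagger y = \dotp{v_i, y}$. Then $ww^\dagger$ is HPSD of rank at most one, and by Proposition \ref{prop:per-rank-1},
\[
  \per(ww^\dagger) = n! \prod_{i=1}^n \abs{w_i}^2 = n! \prod_{i=1}^n \abs{\dotp{v_i, y}}^2 .
\]

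The one substantive step is the L\"owner comparison $\frac{1}{n} ww^\dagger \preceq A$. Since $y \in S_\C(n)$ we have $\norm{y}^2 = n$, so $\frac{1}{n} yy^\dagger = yy^\dagger / \norm{y}^2$ is the orthogonal projection onto $\spann\{y\}$, and in particular $I_n - \frac{1}{n} yy^\dagger \succeq 0$. Conjugating this inequality by $V$ preserves positive semidefiniteness, giving
\[
  A - \frac{1}{n} ww^\dagger = V^\dagger V - \frac{1}{n} V^\dagger y y^\dagger V = V^\dagger \left( I_n - \frac{1}{n} yy^\dagger \right) V \succeq 0 .
\]

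Finally I would combine this with the monotonicity of the permanent under the L\"owner order (Proposition \ref{prop:per-loewner}) and the degree-$n$ homogeneity $\per(cM) = c^n \per(M)$ of the permanent:
\[
  \per(A) \ge \per\!\left( \frac{1}{n} ww^\dagger \right) = \frac{1}{n^n} \per(ww^\dagger) = \frac{n!}{n^n} \prod_{i=1}^n \abs{\dotp{v_i, y}}^2 ,
\]
which is exactly the claimed bound. I do not expect any real obstacle here: the only things to verify are that $ww^\dagger$ is genuinely HPSD of rank at most one (so Proposition \ref{prop:per-rank-1} applies) and that $\frac{1}{n} ww^\dagger \succeq 0$ (so Proposition \ref{prop:per-loewner} applies), both of which are immediate from $w = V^\dagger y$ and $\norm{y}^2 = n$.
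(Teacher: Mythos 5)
Your proposal is correct and follows essentially the same route as the paper: both form the rank-one matrix $V^\dagger y y^\dagger V$, use $yy^\dagger \preceq nI$ (from $\norm{y}^2 = n$) conjugated by $V$ to get the L\"owner comparison, and finish with Proposition \ref{prop:per-loewner}, homogeneity of the permanent, and Proposition \ref{prop:per-rank-1}. No issues.
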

\begin{proof}
  Since $\norm{y}^2 = n$, $y y^\dagger \preceq nI$ and
  $V^\dagger y y^\dagger V \preceq nV^\dagger V$. Thus
  $n^{-n} \per(V^\dagger y y^\dagger V) \le \per(V^\dagger V) = \per(A)$. Since
  $V^\dagger y y^\dagger V$ is a rank-1 matrix, its permanent is
  $n! \prod_i \abs{\dotp{v_i, y}}^2$ by Proposition \ref{prop:per-rank-1}.
\end{proof}

Now we can state our result about approximating the permanent of a HPSD matrix.
\begin{theorem} \label{thm:approx-factor-prod-linear-complex} Given a HPSD
  matrix $A \in \C^{n \times n}$, $\rel(A)$ is a relaxation of $\per(A)$
  computable in polynomial-time by convex programs \eqref{eq:primal-sdp} or
  \eqref{eq:dual-sdp}. Let $r$ be the rank of $P^*$, the solution to
  \eqref{eq:dual-sdp}. Then $\rel(A)$ is an $\frac{n!}{n^n}e^{-nL_r}$-approximation to
  $\per(A)$:
  \begin{align} \label{eq:approx-factor-prod-linear-complex}
    \frac{n!}{n^n}e^{-nL_r} \rel(A) \le \per(A) \le \rel(A)
  \end{align}
\end{theorem}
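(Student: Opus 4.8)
The plan is to assemble the result from the pieces already established in the excerpt. The upper bound $\per(A) \le \rel(A)$ is exactly Lemma \ref{lem:per-rel}, so nothing new is needed there. For the lower bound, I would chain together three facts: first, Lemma \ref{lem:rel-sdp-equiv} identifies $\rel(A)$ with $\nu^*(A)$, the optimum of the convex relaxation \eqref{eq:dual-sdp}; second, Theorem \ref{thm:sdp-rounding} says that the randomized rounding procedure produces a vector $y \in S_\C(n)$ with $\Ex_z[\prod_{i=1}^n \abs{\dotp{v_i, y}}^2] \ge e^{-nL_r}\nu^*(A)$; and third, Lemma \ref{lem:per-rank-1} says that \emph{every} vector $y \in S_\C(n)$ satisfies $\frac{n!}{n^n}\prod_{i=1}^n\abs{\dotp{v_i,y}}^2 \le \per(A)$.

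The key step is to combine the second and third facts correctly. Since the bound in Lemma \ref{lem:per-rank-1} holds pointwise for every outcome of $z$, it holds in expectation: taking expectations over $z$ on both sides of $\frac{n!}{n^n}\prod_{i=1}^n\abs{\dotp{v_i,y}}^2 \le \per(A)$ gives $\frac{n!}{n^n}\Ex_z[\prod_{i=1}^n\abs{\dotp{v_i,y}}^2] \le \per(A)$ (the right side is a constant). Now substitute the lower bound from Theorem \ref{thm:sdp-rounding} for the expectation on the left, and then replace $\nu^*(A)$ by $\rel(A)$ using Lemma \ref{lem:rel-sdp-equiv}:
\begin{align*}
  \per(A) \ge \frac{n!}{n^n}\Ex_z\bracket{\prod_{i=1}^n\abs{\dotp{v_i,y}}^2}
  \ge \frac{n!}{n^n}e^{-nL_r}\nu^*(A) = \frac{n!}{n^n}e^{-nL_r}\rel(A),
\end{align*}
which is precisely \eqref{eq:approx-factor-prod-linear-complex}. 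The claim about polynomial-time computability follows from the remarks after Lemma \ref{lem:primal-dual-sdp}: the geometric-mean objective/constraint in \eqref{eq:primal-sdp} and \eqref{eq:dual-sdp} can be rewritten with second-order conic constraints and solved by interior point methods.

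I do not expect a genuine obstacle here, since this theorem is essentially a corollary that repackages the earlier lemmas; the only thing to be careful about is that $r$, the rank of the optimizer $P^*$ of \eqref{eq:dual-sdp}, is the same $r$ appearing in $L_r$ and in the hypothesis of Theorem \ref{thm:sdp-rounding} — this matches by construction. A minor point worth noting explicitly is that the factorization $A = V^\dagger V$ can be chosen freely by Remark \ref{rmk:indep-of-factor}, so there is no ambiguity in defining $v_i$. One could also remark that the argument gives an explicit rank-1 certificate: for any realized $y$ with $\prod_{i=1}^n\abs{\dotp{v_i,y}}^2$ at least its expectation, the matrix $\frac{1}{n^n}V^\dagger yy^\dagger V$ is a rank-1 HPSD matrix whose permanent lies between $\frac{n!}{n^n}e^{-nL_r}\rel(A)$ and $\per(A)$, so that together with the diagonal matrix $D$ from \eqref{eq:rel-opt-problem} one obtains a two-sided certificate of the approximation.
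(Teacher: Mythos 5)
Your proposal is correct and follows exactly the same chain as the paper's own proof: Lemma \ref{lem:rel-sdp-equiv} to identify $\rel(A)$ with $\nu^*(A)$, Theorem \ref{thm:sdp-rounding} for the expectation lower bound, Lemma \ref{lem:per-rank-1} applied in expectation, and Lemma \ref{lem:per-rel} for the upper bound. Your remark about the explicit rank-1 certificate also matches the discussion in the paper's introduction.
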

Next we state a result that we will use to bound the rank of $P^*$.
\begin{lemma} [Theorem 2.2 in
  \cite{AiLowRankSolutions2008}] \label{lem:rank-reduction} Suppose there is a
  non-zero solution $X$ to the system of equations
  $\{X \succeq 0, \Tr(A_i X) = b_i,\, i = 1, \ldots, d\}$, where $A_i$ is
  Hermitian and $b_i \in \C$. If $d < (r+1)^2$, then one can find in polynomial
  time another solution $X'$ where $\rank(X') = r$.
\end{lemma}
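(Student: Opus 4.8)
The plan is to prove this by the classical rank‑reduction (``purification'') argument for semidefinite feasibility systems, carried out in the complex Hermitian setting: starting from the given feasible $X$, repeatedly move within the affine subspace cut out by the trace constraints until the solution lands on a lower‑dimensional face of the PSD cone, i.e.\ until its rank strictly drops, and iterate until the rank is at most $r$. Note that since $A_i$ is Hermitian and $X\succeq 0$, each $b_i = \Tr(A_i X)$ is real, so I will treat the constraint map as landing in a real vector space.

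For one reduction step, suppose $X\succeq 0$ is feasible with $k := \rank(X) \ge r+1$. Write $X = WW^\dagger$ with $W\in\C^{n\times k}$ of full column rank, and for Hermitian $\Delta\in\C^{k\times k}$ put $X_\Delta := W(I_k+\Delta)W^\dagger$. Then $\Tr(A_i X_\Delta) = b_i + \Tr\!\big((W^\dagger A_i W)\Delta\big)$ with each $W^\dagger A_i W$ Hermitian, so $\Delta \mapsto \big(\Tr((W^\dagger A_i W)\Delta)\big)_{i=1}^d$ is a real‑linear map from the space of $k\times k$ Hermitian matrices into $\mathbb{R}^d$. That space has real dimension $k^2$, and $k\ge r+1$ gives $k^2 \ge (r+1)^2 > d$, so the map has a nonzero kernel element $\Delta$; then $X_{t\Delta}$ satisfies every equality constraint for all real $t$.

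Next I would push to the boundary of the cone: $X_{t\Delta}\succeq 0$ iff $I_k + t\Delta \succeq 0$, which holds exactly for $t$ in a closed interval containing $0$ with endpoints $-1/\lambda_{\min}(\Delta)$ and $-1/\lambda_{\max}(\Delta)$ (with the convention $\pm\infty$ when the relevant eigenvalue has the wrong sign). Since $\Delta\ne 0$ is Hermitian it has a nonzero eigenvalue, so at least one endpoint $t^*$ is finite; at $t = t^*$ the matrix $I_k+t^*\Delta$ is PSD and singular, so $X' := W(I_k+t^*\Delta)W^\dagger$ is feasible and, since multiplying by the injective $W$ and the surjective $W^\dagger$ preserves rank, $\rank(X') = \rank(I_k+t^*\Delta) \le k-1$. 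Computing $\Delta$ is a homogeneous real‑linear system and $t^*$ is read off from the extreme eigenvalues of $\Delta$, so the step runs in polynomial time. Iterating, the rank strictly decreases each time, and a reduction remains available as long as the current rank $k$ satisfies $k^2 > d$, which by $d < (r+1)^2$ holds whenever $k\ge r+1$; hence after at most $n$ polynomial‑time steps we reach a feasible $X'$ with $\rank(X')\le r$, and choosing $\Delta$ with a simple extreme eigenvalue (so each step drops the rank by exactly one) lets one halt once the rank equals $r$.

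The main obstacle, and really the only place care is needed, is getting the dimension count right in the complex case: over the reals the space of $k\times k$ Hermitian matrices has dimension $k^2$ — not $\binom{k+1}{2}$, the real‑symmetric figure — and this is exactly what produces the threshold $(r+1)^2$ here rather than $\binom{r+2}{2}$. The remaining points are minor bookkeeping, handled as in \cite{AiLowRankSolutions2008}: one must check the reduced $X'$ is nonzero, which holds as soon as $\Delta$ is not a scalar multiple of $I_k$ (automatic when the $b_i$ are not all zero, since $\Delta = cI_k$ in the kernel forces $c\,b_i = 0$ for all $i$), and the degenerate case $b\equiv 0$ is treated separately.
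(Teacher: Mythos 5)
The paper offers no proof of this lemma; it is quoted as Theorem 2.2 of \cite{AiLowRankSolutions2008}, so there is nothing internal to compare against. Your reconstruction is the standard rank-reduction (face-walking) argument for SDP feasibility systems, correctly transplanted to the complex Hermitian setting, and the crux --- that the real dimension of the space of $k\times k$ Hermitian matrices is $k^2$ rather than $\binom{k+1}{2}$, which is exactly what yields the threshold $(r+1)^2$ in place of $\binom{r+2}{2}$ --- is handled correctly. The single-step reduction ($X=WW^\dagger$, perturb by $W\Delta W^\dagger$ with $\Delta$ Hermitian in the kernel of the constraint map, slide $t$ to an endpoint of the feasibility interval to make $I_k+t^*\Delta$ singular) and the iteration down to rank at most $r$ are all sound and polynomial-time.

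One caveat: the lemma as stated in the paper asks for $\rank(X')=r$ exactly, and your final remark about choosing $\Delta$ with a simple extreme eigenvalue does not salvage this in general --- it is not always possible, and indeed the exact-equality version of the statement is false (e.g.\ the system $\Tr(E_{11}X)=1$, $\Tr(E_{22}X)=\Tr(E_{33}X)=0$ on $3\times 3$ HPSD matrices has $d=3<9$ but its unique solution has rank $1$, so no solution of rank $2$ exists). The correct reading, which is what \cite{AiLowRankSolutions2008} actually proves and what the paper uses in Corollary \ref{cor:approx-factor}, is $\rank(X')\le r$; your argument establishes precisely that, so you should simply state the conclusion as an inequality and drop the last sentence.
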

We can now prove Corollary \ref{cor:approx-factor}.
\begin{proof}[Proof of Corollary \ref{cor:approx-factor}]
  Given a solution $P$ to \eqref{eq:dual-sdp}, any HPSD matrix $P'$ that satisfy
  the $n+1$ equalities $\Tr(v_iv_i^\dagger P') = v_i^\dagger P v_i$ and
  $\Tr(P') = n$ will have the same objective value as that of $P$. Applying
  Lemma \ref{lem:rank-reduction}, we can find in polynomial time an optimal
  solution $P^*$ with $\rank(P^*) \le O(\sqrt n)$. We then apply Theorem
  \ref{thm:approx-factor-prod-linear-complex}.
\end{proof}
Finally we prove Theorem \ref{thm:approx-factor-prod-linear-complex}.
\begin{proof}[Proof of Theorem \ref{thm:approx-factor-prod-linear-complex}]
  We use the vector $y$ produced in the rounding procedure in Theorem
  \ref{thm:sdp-rounding} to construct a rank-1 matrix $V^\dagger y y^\dagger
  V$. We then compare the permanent of this matrix to $\per(A)$ and $\rel(A)$:
  \begin{align*}
    \frac{n!}{n^n} e^{-nL_r} \rel(A)
    \stackrel{1}{=} \frac{n!}{n^n}\,  e^{-nL_r} \nu^*(A)
    \stackrel{2}{\le} \frac{n!}{n^n}\, \Ex\bracket{\prod_{i=1}^n \abs{\dotp{v_i, y}}^2}
    \stackrel{3}{\le} \per(A)
    \stackrel{4}{\le} \rel(A)
  \end{align*}
  \begin{enumerate}
  \item Apply Lemma \ref{lem:rel-sdp-equiv}.
  \item Apply Theorem \ref{thm:sdp-rounding}.
  \item Lemma \ref{lem:per-rank-1} shows that for any vector $y \in S_\C(n)$,
    $\per(A) \ge \frac{n!}{n^n} \prod_{i=1}^n \abs{\dotp{v_i, y}}^2$. This is also
    true when taking an expectation of any distribution supported on $S_\C(n)$.
  \item Apply Lemma \ref{lem:per-rel}.
  \end{enumerate}
\end{proof}

\subsection{Low Rank Instances} \label{sec:rank}
There are structured classes of HPSD matrices where we can prove a priori that
the rank of $P^*$ is low and thus a better approximation ratio can be
obtained. For example, it is easy to show that $\rank(P^*) \le \rank(A)$.
% Just rotate v_i into low-rank subspace, induces same action on $P^*$ without
% changing objective
Often such instances also have additional symmetry, such as the class of
circulant matrices.
\begin{corollary}
  A square matrix is circulant if each row is cyclically shifted one position to
  the right compared to the previous row. If $A \in \C^{n \times n}$ is HPSD and
  circulant, then there is a solution $P^*$ to \eqref{eq:dual-sdp} where
  $\rank(P^*) = 1$ and we have the bound
  \begin{align*}
    \frac{n!}{n^n}\rel(A) \le \per(A) \le \rel(A).
  \end{align*}
\end{corollary}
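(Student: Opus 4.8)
The plan is to show that for a circulant HPSD matrix $A$ both $r(A)$ and $\rel(A)$ collapse to $\lambda_{\max}(A)^n$, and that a maximizer of \eqref{eq:product-of-linear-forms} furnishes a rank-$1$ optimal solution of \eqref{eq:dual-sdp}; the stated inequality is then exactly Theorem \ref{thm:approx-factor-prod-linear-complex} applied with $r=1$, since $L_1 = H_0 - \log 1 = 0$ and hence $e^{-nL_1} = 1$. First I would invoke the standard fact that a circulant matrix is unitarily diagonalized by the discrete Fourier transform matrix $F$, so $A = F^\dagger \Diag(\hat c) F$ with eigenvalues $\hat c = (\hat c_1, \dots, \hat c_n)$ that are real and, since $A \succeq 0$, nonnegative. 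By Remark \ref{rmk:indep-of-factor} I may take the factorization $A = V^\dagger V$ with $V = \Diag(\hat c)^{1/2} F$, and the structural point I would extract is that every column $v_l$ of this particular $V$ has the same entrywise magnitudes, $\abs{(v_l)_k}^2 = \hat c_k / n$, because the columns of $F$ differ only by unit-modulus phases.

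With this in hand the two halves of the sandwich are short. For the upper bound, $D = \lambda_{\max}(A)\,I$ is feasible in \eqref{eq:rel-opt-problem} since $A \preceq \lambda_{\max}(A)\,I$, so $\rel(A) \le \lambda_{\max}(A)^n$. For the lower bound, I would fix an index $m$ with $\hat c_m = \lambda_{\max}(A)$ and take $x = \sqrt n\, e_m \in S_\C(n)$; then $\dotp{x, v_l} = \sqrt n\,(v_l)_m$, so $\abs{\dotp{x, v_l}}^2 = n \cdot \hat c_m/n = \hat c_m$ for every $l$ (the phases drop out under $\abs{\cdot}^2$), whence $\prod_{l=1}^n \abs{\dotp{x, v_l}}^2 = \lambda_{\max}(A)^n$ and $r(A) \ge \lambda_{\max}(A)^n$. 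Combining this with $r(A) \le \nu^*(A)$ from Lemma \ref{lem:primal-dual-sdp} and $\nu^*(A) = \rel(A)$ from Lemma \ref{lem:rel-sdp-equiv} pins down $r(A) = \nu^*(A) = \rel(A) = \lambda_{\max}(A)^n$.

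Finally I would read off the rank-$1$ solution: $P^* := x x^\dagger = n\, e_m e_m^\dagger$ is positive semidefinite with $\Tr(P^*) = n$, has rank $1$, and attains $\prod_{l=1}^n v_l^\dagger P^* v_l = \prod_{l=1}^n \abs{\dotp{x, v_l}}^2 = \lambda_{\max}(A)^n = \nu^*(A)$, so it is optimal for \eqref{eq:dual-sdp}. Applying Theorem \ref{thm:approx-factor-prod-linear-complex} with $r = \rank(P^*) = 1$ then yields $\frac{n!}{n^n}\rel(A) \le \per(A) \le \rel(A)$; the degenerate case $A = 0$ needs no argument since every quantity is zero. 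There is no real obstacle in this proof; the only thing one must not overlook is that choosing the Fourier factorization equalizes the magnitude profiles of all $n$ columns, which is precisely what makes a single coordinate vector $\sqrt n\, e_m$ simultaneously optimal for every linear form, and without this observation one would be tempted to search over the sphere for a joint maximizer.
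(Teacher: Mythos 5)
Your proof is correct, and it reaches the conclusion by a different route than the paper. The paper argues abstractly from symmetry: it takes an optimal diagonal $D^*$ for \eqref{eq:rel-opt-problem}, averages it over all cyclic shifts (using convexity and the shift-invariance of $A$) to conclude $D=\lambda I$ is optimal, and then invokes complementary slackness for the primal--dual pair \eqref{eq:primal-sdp}--\eqref{eq:dual-sdp} to deduce that $P = vv^\dagger$ with $v$ a scaled top eigenvector of $VV^\dagger$ is optimal. You instead work entirely explicitly in the Fourier basis: choosing $V=\Diag(\hat c)^{1/2}F$ (legitimate by Remark \ref{rmk:indep-of-factor}) equalizes the magnitude profiles of the columns, so the single coordinate vector $x=\sqrt n\,e_m$ attains $\lambda_{\max}(A)^n$ for the product of linear forms, while $D=\lambda_{\max}(A)I$ certifies $\rel(A)\le\lambda_{\max}(A)^n$; matching the two values pins down a rank-one optimizer without any appeal to complementary slackness or averaging. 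Note that your $P^*=n\,e_me_m^\dagger$ is exactly the paper's top-eigenvector solution expressed in the Fourier factorization (there $VV^\dagger=\Diag(\hat c)$), so the two constructions coincide; what differs is the optimality argument. Your version is more elementary and self-contained, and it yields the slightly stronger byproduct that $r(A)=\rel(A)=\lambda_{\max}(A)^n$ for circulant HPSD matrices, i.e.\ the convex relaxation of \eqref{eq:product-of-linear-forms} is exact on this class; the paper's symmetrization argument, on the other hand, generalizes more readily to other transitive symmetry groups where no explicit diagonalizing basis is at hand.
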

\begin{proof}
  Since $A = V^\dagger V$ is circulant it is invariant under the map
  $A_{i,j} \mapsto A_{(i+1 \mod n), (j+1 \mod n)}$. Suppose we have an optimal
  solution $D^*$ to $\rel(A)$ in \eqref{eq:rel-opt-problem}, where $D^*$ is a
  diagonal matrix satisfying $A \preceq D^*$. We then average over all cyclic
  shifts of $D^*$ to show that $D = \lambda I$ is also optimal, which
  corresponds to $\alpha_i=1$ in \eqref{eq:primal-sdp}, with an optimal solution
  $P$ of \eqref{eq:dual-sdp} satisfying the complementary slackness condition of
  $\Tr(PVV^\dagger) = n\lambda$. This shows that $P = vv^\dagger$ is also a
  solution, where $v$ is a suitable multiple of the top eigenvector of
  $VV^\dagger$.
\end{proof}

We also observed experimentally that $\rank(P^*)$ is small for random
$A$. Figure \ref{fig:rank-plots} plots this rank as a function of $n$, for
instances of $A$ drawn from the Gaussian orthogonal ensemble. The results
suggest that $\rank(P^*)$ for these random instances grows slower than
$O(\sqrt n)$.
\begin{figure}[t]
  \label{fig:rank-plots}
  \includegraphics[scale=0.7]{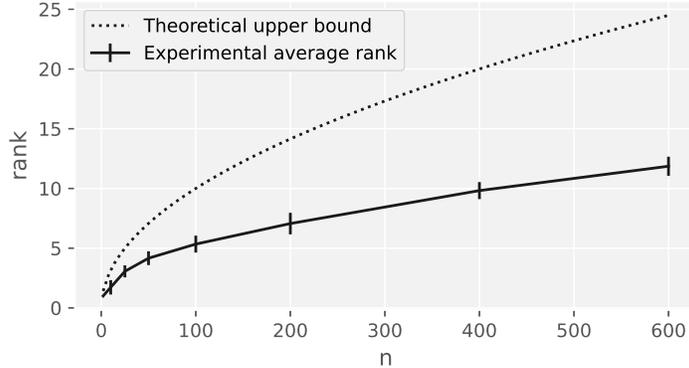}
  \centering
  \caption{Plot of average of $\rank(P^*)$ (along with error bars indicating
    standard deviation) of 50 random instances as a function of $n$.
    $A = V^\dagger V$ is constructed by sampling each entry of
    $V \in \R^{n\times n}$ from a standard Gaussian. The theoretical upper bound
    for $\rank(P^*)$ in Lemma \ref{lem:rank-reduction} is also shown for
    comparison.}
\end{figure}

\section{A Conjecture} \label{sec:conj}
Our analysis of $\rel(A)$ was inspired by the optimization problem
\eqref{eq:product-of-linear-forms}, maximizing a product of linear forms over
the complex sphere. We conjecture that the exact solution to this optimization
problem is a tighter relaxation of the permanent.
\begin{conjecture}\label{conj:psd-per-approx}
  Given $A = V^\dagger V$, where $v_i$ are the columns of $V$, recall that
  $r(A)$ is the maximum of a product of linear forms as defined in
  \eqref{eq:product-of-linear-forms}. Then
  \begin{align} \label{eq:conj} \frac{n!}{n^n} r(A) \le \per(A)
    \le r(A).
  \end{align}
\end{conjecture}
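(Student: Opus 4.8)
The lower bound in \eqref{eq:conj} follows at once from what we have already proved: letting $x^\star \in S_\C(n)$ attain the maximum in \eqref{eq:product-of-linear-forms}, Lemma \ref{lem:per-rank-1} applied with $y = x^\star$ gives
\begin{align*}
  \frac{n!}{n^n}\, r(A) \;=\; \frac{n!}{n^n}\prod_{i=1}^n \abs{\dotp{v_i, x^\star}}^2 \;\le\; \per(A).
\end{align*}
So the real content of Conjecture \ref{conj:psd-per-approx} is the upper bound $\per(A) \le r(A)$, a strengthening of Lemma \ref{lem:per-rel} since $r(A) \le \rel(A)$. In the van der Waerden analogy, $r(A)$ is the normalization under which $\per$ ranges over $[\tfrac{n!}{n^n},\,1]$, with $\tfrac{1}{n}J$ playing the role of the all-$\tfrac{1}{n}$ doubly stochastic matrix ($r(\tfrac{1}{n}J)=1$ and $\per(\tfrac{1}{n}J)=\tfrac{n!}{n^n}$, so the lower bound is tight there) and the diagonal matrices playing the role of the permutation matrices ($\per$ and $r$ coincide on them, so the upper bound is tight there); the twist is that here it is the ``easy'' upper half of the analogy that resists a direct argument.

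To attack $\per(A)\le r(A)$, the natural starting point is the integral representation of Proposition \ref{prop:per-int-rep} together with Fact \ref{fact:gaussian-sphere-correction}. Write $f(x)=\prod_{i=1}^n\abs{\dotp{v_i,x}}^2$ and let $\Ex$ denote expectation over $x$ uniform on $S_\C(n)$. Applying the integral representation once to $A$ and once to $I_n$ (for which $f(x)=\prod_i\abs{x_i}^2$ and $\per(I_n)=r(I_n)=1$) and dividing, the normalization constant cancels and one obtains the identity
\begin{align*}
  \per(A) \;=\; \frac{\Ex\bracket{f(x)}}{\Ex\bracket{\prod_{i=1}^n\abs{x_i}^2}}.
\end{align*}
Hence $\per(A)\le r(A)$ is \emph{equivalent} to the assertion that the scale-invariant ratio $\Ex[f]/\max_x f(x)$ is maximized, over all products of $n$ linear forms on $S_\C(n)$, by $f(x)=\prod_i\abs{x_i}^2$ --- equivalently, that the diagonal matrices maximize $\per(A)/r(A)$. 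I would pursue this in two ways. First, a variational route: by homogeneity and compactness fix a normalization and take $A^\star$ to maximize $\per(A)/r(A)$; using the Lagrange characterization of the maximizer $x^\star$ of $r$, namely $\bigl(\sum_i \abs{\dotp{v_i,x^\star}}^{-2}\,v_iv_i^\dagger\bigr)x^\star = x^\star$, together with the integral formula for $\per$, show that unless $A^\star$ is diagonal some perturbation of the $v_i$ strictly increases the ratio. Second, the reduction advertised in the introduction: pass to the canonical form given by the diagonal congruence $\tilde v_i = v_i/\dotp{v_i,x^\star}^*$, under which $\per(\tilde A)=\per(A)/r(A)$, $r(\tilde A)=1$ is attained at $x^\star$ with every factor equal to $1$, and $\tilde V\tilde V^\dagger x^\star = x^\star$; in this form the target is $\per(\tilde A)\le 1$, which should be deducible from a known long-standing conjecture on permanents of HPSD matrices (an extremality statement of precisely this shape), and which is tight for $\tilde A = I_n$.

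The main obstacle is this upper bound, and it is essentially why \eqref{eq:conj} is only conjectural. The integral representation alone gives merely $\per(A)\le\tfrac{(2n-1)!}{n^n(n-1)!}\,r(A)$, losing a factor $e^{\Theta(n)}$: the spherical average of $f$ is bounded only by $r(A)$ times $\Ex_{z\sim\cnormal(0,I_n)}\bracket{\norm{z}^{2n}}$, a moment dominated by atypically large $\norm{z}$. Closing the gap requires exploiting the anti-correlation between $\norm{z}$ being large and the direction $z/\norm{z}$ pointing near a maximizer of $f$ --- structure that the product-of-linear-forms form of $f$ ought to supply, but that I do not see how to extract directly. I therefore expect the honest route to be the second one above --- showing $\per(A)\le r(A)$ is implied by the relevant HPSD-permanent conjecture --- and then confirming the reduction is faithful by checking it is tight in the two extremal cases $A$ diagonal (equality in $\per\le r$) and $A=\tfrac{1}{n}J$ (equality in $\tfrac{n!}{n^n}r\le\per$), which is also exactly what makes the van der Waerden analogy precise.
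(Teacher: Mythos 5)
This statement is a conjecture, and the paper does not prove it either; what the paper actually establishes is (a) the lower bound, via Lemma \ref{lem:per-rank-1}, and (b) that the upper bound would follow from Pate's conjecture. Your treatment of the lower bound is exactly the paper's: apply Lemma \ref{lem:per-rank-1} to a maximizer $x^\star$. Your diagnosis of the upper bound is also correct and matches the paper's remark that one ``cannot naively apply Proposition \ref{prop:per-int-rep} and bound the integral by its maximum'': the direct bound loses the factor $\tfrac{(2n-1)!}{n^n(n-1)!}=e^{\Theta(n)}$. Your identity $\per(A)=\Ex[f]/\Ex[\prod_i\abs{x_i}^2]$ over the sphere, the normalized form $\tilde v_i = v_i/\dotp{v_i,x^\star}^*$ with $\per(\tilde A)=\per(A)/r(A)$ and $r(\tilde A)=1$, and the stationarity condition $\tilde V\tilde V^\dagger x^\star = x^\star$ all check out.

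Where the paper goes further than you do is in making your ``second route'' concrete. The relevant conjecture is Pate's: $\per(A\otimes J_k)\ge \per(A)^k(k!)^n$ for HPSD $A$. Via Proposition \ref{prop:per-int-rep}, $\per(A\otimes J_k)$ is the $k$-th Gaussian moment $\Ex_{x\sim\cnormal(0,I_n)}\bigl[\prod_i\abs{\dotp{v_i,x}}^{2k}\bigr]$ and $(k!)^n=\per(I\otimes J_k)$ is the corresponding moment of $\prod_i\abs{x_i}^{2k}$, so Pate's inequality says exactly that the ratio of $L^k$-norms of $f$ to $\prod_i\abs{x_i}^2$ is at least $\per(A)$ for every $k$. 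Applying Fact \ref{fact:gaussian-sphere-correction} to transfer to the sphere (the degree-$nk$ correction factors cancel in the ratio) and letting $k\to\infty$ turns both $L^k$-norms into suprema, giving $r(A)\ge\per(A)\cdot\max_x\prod_i\abs{x_i}^2=\per(A)$. This is precisely the quantitative version of your observation that the upper bound is equivalent to diagonal matrices maximizing $\Ex[f]/\max f$: Pate's conjecture asserts that extremality at the level of every $k$-th moment, and the $k\to\infty$ limit is what converts it into the statement about the maximum. So your proposal is faithful to the paper's content for everything that is actually provable, and your intended reduction is the paper's; the missing ingredient is only the identification of Pate's conjecture and the $L^k\to L^\infty$ limiting argument that executes it.
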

If the matrix $A$ is scaled so that $r(A) = 1$, then \eqref{eq:conj} is exactly
the same bounds given by the van der Waerden's conjecture for doubly stochastic
matrices (proved by \cite{FalikmanProofvanWaerden1981},
\cite{EgorychevsolutionvanWaerden1981}
\cite{GurvitsVanWaerdenSchrijverValiant2008a}). The lower bound follows from
Lemma \ref{lem:per-rank-1}, but the upper bound cannot be proven by naively
applying Proposition \ref{prop:per-int-rep} and bounding the integral over the
complex sphere by its maximum. However, we can show that the upper bound is
implied by another conjecture on permanents:
\begin{conjecture}[Pate's conjecture \cite{Pateinequalityinvolvingpermanents1984}]
  Given any $n \times n$ HPSD matrix $A$, let $A \otimes J_k$ be the Kronecker
  product of $A$ with the $k \times k$ all-ones matrix. Then
  \begin{align} \label{eq:pate-conj}
    \per(A \otimes J_k) \ge  \per(A)^k (k!)^n.
  \end{align}
\end{conjecture}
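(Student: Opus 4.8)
The plan is to recast Pate's inequality as a statement about moments of complex Gaussian vectors, and then prove it by induction on $k$.

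\emph{Reduction.} Write $A = V^\dagger V$ with columns $v_i$, and set $p(x) = \prod_{i=1}^n \dotp{v_i,x}$. Since $A \otimes J_k = \tilde V^\dagger \tilde V$, where $\tilde V$ is the matrix whose columns are $k$ copies of each $v_i$, Proposition \ref{prop:per-int-rep} (applied after padding $\tilde V$ with zero rows) gives
\[
  \per(A) = \Ex_{x \sim \cnormal(0, I_n)}\bracket{\abs{p(x)}^2}, \qquad
  \per(A \otimes J_k) = \Ex_{x \sim \cnormal(0, I_n)}\bracket{\abs{p(x)}^{2k}}.
\]
Equivalently, with $w = V^\dagger x \sim \cnormal(0, A)$ (Proposition \ref{prop:cnormal-linear-transform}) and $X_i = \abs{w_i}^2$, these are $\Ex[\prod_i X_i] = \per(A)$ and $\Ex[\prod_i X_i^k] = \per(A \otimes J_k)$. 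Both sides of \eqref{eq:pate-conj} scale by $\prod_i \norm{v_i}^{2k}$ under $v_i \mapsto v_i/\norm{v_i}$ (the case $\norm{v_i} = 0$ being trivial), so I may assume $A$ is a correlation matrix; then each $X_i$ is marginally $\mathrm{Exp}(1)$ and the inequality becomes $\Ex[\prod_i X_i^k] \ge (k!)^n (\Ex[\prod_i X_i])^k$. This is sharp: for $A = I_n$ the $X_i$ are i.i.d.\ $\mathrm{Exp}(1)$ and both sides equal $(k!)^n$ (indeed $\per(I_n \otimes J_k) = \per(J_k)^n = (k!)^n$). So the conjecture asserts that correlation among the linear forms can only help.

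\emph{A telescoping strengthening.} I would try to prove the sharper inequality
\[
  \per(A \otimes J_k) \;\ge\; k^n\,\per(A)\,\per(A \otimes J_{k-1}),
\]
which telescopes (using $\per(A \otimes J_1) = \per(A)$ and $\prod_{j=2}^{k} j^n = (k!)^n$) to $\per(A \otimes J_k) \ge (k!)^n \per(A)^k$. In Gaussian form it reads $\Ex[\prod_i X_i^k] \ge k^n \,\Ex[\prod_i X_i]\,\Ex[\prod_i X_i^{k-1}]$. One checks it holds with equality whenever $A$ is diagonal, and with slack already in the rank-one case, where it amounts to $(nk)! \ge (n!)^k (k!)^n$; the latter follows from $\binom{nm}{n} = \prod_{j=0}^{n-1}\tfrac{nm-j}{n-j} \ge m^n$ (each factor is $\ge m$) together with $(nk)!/(n!)^k = \prod_{i=1}^{k-1}\binom{n(i+1)}{n}$.

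\emph{The inductive step.} The tool is complex-Gaussian integration by parts: $\Ex[\bar w_i F(w,\bar w)] = \sum_j A_{ji}\,\Ex[\partial_{w_j}F]$ for $w \sim \cnormal(0,A)$. Taking $F = w_i \prod_l X_l^{a_l}$ and writing $m(a) = \Ex[\prod_l X_l^{a_l}] = \per(A_a)$, where $A_a$ is the Gram matrix of the $v_l$ taken with multiplicities $a_l$, yields the recursion
\[
  m(a + e_i) \;=\; A_{ii}(1 + a_i)\,m(a) \;+\; \sum_{j \ne i} A_{ji}\,a_j\;\Ex\!\bracket{w_i \bar w_j\, X_j^{a_j - 1}\prod_{l \ne j} X_l^{a_l}}.
\]
The diagonal part $A_{ii}(1+a_i)m(a)$ already reproduces the equality case $A = I_n$, so the point is to show that the off-diagonal cross terms are nonnegative and, when the recursion is iterated over $i = 1,\dots,n$ to pass from $a = (k-1,\dots,k-1)$ to $(k,\dots,k)$, account for the missing factor. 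Each cross term, after expanding $w_i,\bar w_j$ against a standard Gaussian and applying Wick's theorem, is a nonnegative combination of permanents of HPSD Gram matrices (cf.\ Proposition \ref{prop:per-loewner}); a quantitative bound would then follow from a ``permanental FKG''/positive-association property for the family $\{X_i\}$, whose pairwise covariances are $\abs{A_{ij}}^2 \ge 0$ and, one hopes, whose higher joint cumulants are nonnegative as well. An alternative, non-inductive route is to regard $f(A) = \per(A \otimes J_k)/\per(A)^k$ on the compact set of $n\times n$ correlation matrices, write down the first-order optimality conditions at a minimizer, and argue by symmetrization/majorization that the minimum, $(k!)^n$, is attained at $A = I_n$ (direct-sum decompositions reduce the problem to matrices $A$ that are not block-diagonal).

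\emph{The main obstacle.} It is precisely this last step: controlling the interaction among the $n$ coordinates. A coordinate-by-coordinate estimate leaks, because raising $\abs{w_i}^{2k}$ distorts the conditional law of the remaining $w_l$; what is really needed is a genuinely multivariate correlation inequality for products of squared moduli of jointly complex-Gaussian variables, or a convexity/Schur-convexity property of $\per(A \otimes J_k)$ in $A$ strong enough to pin down the global minimum. No such statement is currently available --- which is why \eqref{eq:pate-conj} is still open --- so I would expect progress first in special cases: small $k$, small $\rank(A)$, or highly symmetric $A$ such as the circulant matrices of Section \ref{sec:rank}.
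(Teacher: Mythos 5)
The statement you were asked about is not a theorem of the paper: it is Pate's conjecture, quoted from the literature and left open (the paper notes it is proved only for $n=2$ and is used purely as a hypothesis, to derive the upper bound of Conjecture~5.1 via Proposition~\ref{prop:per-int-rep} and Fact~\ref{fact:gaussian-sphere-correction}). So there is no proof in the paper to compare against, and you correctly recognize that your attempt does not close. Your preparatory steps are all sound and match the paper's own manipulations: the identification $\per(A\otimes J_k)=\Ex_{x\sim\cnormal(0,I_n)}\bigl[\prod_i\abs{\dotp{v_i,x}}^{2k}\bigr]$ is exactly the Gaussian-moment form the paper uses, the normalization to correlation matrices is legitimate, equality at $A=I_n$ is right, the telescoping reformulation is consistent (it holds with equality for diagonal $A$ and your rank-one check is correct), and the complex-Gaussian integration-by-parts recursion for $m(a+e_i)$ is derived correctly.

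The gap is the one you name, and it is worth making quantitative. Even if one could show that the off-diagonal cross terms in your recursion are nonnegative (note that individually they are complex; only their sum is forced to be real), iterating $m(a+e_i)\ge A_{ii}(1+a_i)\,m(a)$ from $a=(k-1,\dots,k-1)$ to $(k,\dots,k)$ yields only $\per(A\otimes J_k)\ge k^n\bigl(\prod_i A_{ii}\bigr)\per(A\otimes J_{k-1})$, hence after telescoping $\per(A\otimes J_k)\ge (k!)^n\bigl(\prod_i A_{ii}\bigr)^{k-1}\per(A)$. Since $\per(A)\ge\prod_i A_{ii}$ for HPSD $A$, this is \emph{weaker} than \eqref{eq:pate-conj}: the diagonal part of the recursion cannot supply the factor $\per(A)^{k-1}$, so the entire content of the conjecture sits in a sharp lower bound on the cross terms --- precisely the multivariate correlation inequality for $(\abs{w_1}^2,\dots,\abs{w_n}^2)$ that is not available. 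Your proposal should therefore be read as a (correct) reduction and a map of the obstruction, not as a proof; as a proof it is incomplete, as you yourself state.
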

This conjecture has been proved in the case where $n = 2$, see
\cite{Zhangupdatefewpermanent2016} for a survey of subsequent progress on this
conjecture. Using the integral representation of the permanent (Proposition
\ref{prop:per-int-rep}), we can write \eqref{eq:pate-conj} as:
\begin{align*}
  \Ex_{x\sim \cnormal(0, I_n)}\bracket{\prod_{i=1}^n \abs{\dotp{v_i,x}}^{2k}}^{1/k} \ge \per(A)
  \Ex_{x\sim \cnormal(0, I_n)}\bracket{\prod_{i=1}^n \abs{x_i}^{2k}}^{1/k}
\end{align*}
Since both expectations are taken over homogeneous polynomials of degree $d$, we
can apply Fact \ref{fact:gaussian-sphere-correction}, take
$k \rightarrow \infty$ and get:
\begin{align*}
  \max_{\norm{x}^2 = n} \prod_{i=1}^n \abs{\dotp{v_i, x}}^2
  \ge \per(A) \max_{\norm{x}^2 = n} \prod_{i=1}^n \abs{x_i}^2 = \per(A).
\end{align*}
% There is reason to believe that Conjecture \ref{conj:psd-per-approx} is easier
% to prove than Pate's conjecture. For fixed $n$, Conjecture
% \ref{conj:psd-per-approx} is a statement about semialgebraic sets (which can
% be verified by algorithms from computational algebraic geometry), but in order
% to prove it with Pate's conjecture, we need its result for all $k$.

\section{Discussion and Conclusion}
There are a few interesting directions that stem from this work. For random $A$
(i.e. drawn from the Gaussian orthogonal ensemble), numerical experiments in
Section \ref{sec:rank} suggest that $\rank(P^*)$ is very small compared to
$\sqrt{n}$. It would be interesting to provide concrete bounds on the rank of
random instances. One might also ask if we can construct sequences of matrices
$A_k$ of increasing size but with fixed rank $r$, where
$(\rel(A_k)/\per(A_k))^{1/n} \rightarrow e^{1+L_r}$. This is related to the
question called the \emph{linear polarization constant of Hilbert spaces}, see
\cite{PappasLinearpolarizationconstants2004} for such a construction and its
analysis.

The main result of this paper uses the connection between the permanent and the
optimization of a product of linear forms over the sphere
\eqref{eq:product-of-linear-forms}. Although it is natural to conjecture the
hardness of computing $r(A)$, we do not know of any formal results establishing
this. We also proposed Conjecture \ref{conj:psd-per-approx} which would explain
why this optimization problem is intimately related to the permanent. Better
understanding of this problem may lead to further insights about the permanent
of HPSD matrices.

\bibliographystyle{amsalpha}
\bibliography{../../MyLibraryTEX}

\appendix
\section{Asymptotics of the Approximation Factor}
\begin{proposition}\label{prop:approx-asymptotics}
  For all positive integers $r$,
  \begin{align} \label{eq:gamma-asymp}
    \frac{1}{2r} < \gamma - L_r < \frac{r+2}{2r(r+1)}.
  \end{align}
\end{proposition}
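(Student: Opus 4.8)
The plan is to rewrite $\gamma - L_r$ as a convergent tail series and bound it term by term. Starting from the classical representation $\gamma = \sum_{k=1}^{\infty}\bigl(\frac1k - \log\frac{k+1}{k}\bigr)$ (whose partial sums are $H_m - \log(m+1)$), subtracting $H_{r-1}$, using the telescoping identity $\sum_{k=1}^{r-1}\log\frac{k+1}{k} = \log r$, and recalling $L_r = H_{r-1} - \log r$, a short computation gives
\[
  \gamma - L_r \;=\; \sum_{k=r}^{\infty} f(1/k), \qquad \text{where } f(x) := x - \log(1+x) > 0 .
\]
So it suffices to produce term-wise bounds on $f(1/k)$ whose tails telescope to the two sides of \eqref{eq:gamma-asymp}.

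For the left inequality I would invoke convexity of $x \mapsto 1/x$: the trapezoidal overestimate of $\log(1+\frac1k) = \int_k^{k+1}\frac{dx}{x}$ gives $\log(1+\frac1k) < \frac12\bigl(\frac1k + \frac1{k+1}\bigr)$, hence $f(1/k) > \frac1k - \frac12\bigl(\frac1k+\frac1{k+1}\bigr) = \frac1{2k(k+1)}$. Summing over $k \ge r$ telescopes to $\frac1{2r}$; the inequality is strict because we are summing strictly positive differences of convergent series.

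For the right inequality the target is the sharper per-term estimate $f(1/k) < \frac1{k(k+1)} - \frac1{2(k+1)(k+2)}$, whose tail telescopes to $\frac1r - \frac1{2(r+1)} = \frac{r+2}{2r(r+1)}$. After clearing denominators (equivalently, setting $t = 1/k$) this becomes the claim that $g(t) := \log(1+t) - \frac{3t}{2(1+t)} + \frac{t}{2(1+2t)}$ is positive for $t > 0$. I would prove this by one-variable calculus: $g(0) = 0$ and $g'(t) = \frac{2t-1}{2(1+t)^2} + \frac1{2(1+2t)^2}$, whose numerator over the positive common denominator $2(1+t)^2(1+2t)^2$ collapses to $(2t-1)(1+2t)^2 + (1+t)^2 = 8t^3 + 5t^2 > 0$ for $t > 0$; so $g$ is strictly increasing on $[0,\infty)$ and positive on $(0,\infty)$, in particular at each $t = 1/k$.

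The series manipulation and the telescoping sums are routine; the step that needs care is isolating the \emph{sharp} rational upper bound $f(1/k) < \frac1{k(k+1)} - \frac1{2(k+1)(k+2)}$. Coarser estimates — e.g.\ $f(1/k) < \frac1{2k^2}$, or the midpoint bound $\log(1+\frac1k) > \frac1{k+1/2}$ — do not telescope to exactly $\frac{r+2}{2r(r+1)}$, so establishing the positivity of $g$ (or an equivalent monotonicity argument) is the crux of the proof.
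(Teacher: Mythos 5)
Your proof is correct, and the telescoping tail-series structure closely mirrors the paper's. The paper expresses $H_r - \log r - \gamma$ as $\sum_{k\ge r}\Delta_k$ with $\Delta_k = \log\frac{k+1}{k} - \frac{1}{k+1}$, while you write $\gamma - L_r$ directly as $\sum_{k\ge r} f(1/k)$ with $f(x) = x - \log(1+x)$; these are complementary decompositions, since $f(1/k) + \Delta_k = \frac{1}{k(k+1)}$, so your per-term bounds are logically equivalent to theirs. For the lower bound $\frac{1}{2r}$, your trapezoidal estimate $\log(1+\tfrac1k) < \tfrac12(\tfrac1k + \tfrac1{k+1})$ is just the analytic form of the paper's ``larger triangle'' picture---essentially the same proof. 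The real divergence is the upper bound: the paper argues geometrically, using the tangent to $y = 1/x$ at $x = k+1$ to get $\Delta_k > \frac{1}{2(k+1)^2}$ and then loosening to $\frac{1}{2(k+1)(k+2)}$, whereas you directly target the telescoping estimate $f(1/k) < \frac{1}{k(k+1)} - \frac{1}{2(k+1)(k+2)}$ and verify it by showing $g(t) = \log(1+t) - \frac{3t}{2(1+t)} + \frac{t}{2(1+2t)}$ satisfies $g(0)=0$ and $g'(t) = \frac{t^2(8t+5)}{2(1+t)^2(1+2t)^2} > 0$. Your algebra checks out, and this calculus route has the advantage of being self-contained rather than leaning on a figure; the paper's argument is shorter but relies on geometric intuition. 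Both yield exactly the claimed inequality.
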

\begin{proof}
  It is easy to see that \eqref{eq:gamma-asymp} follows from
  \begin{align*}
    \frac{1}{2(r+1)} < H_n - \log(r) - \gamma < \frac{1}{2r}.
  \end{align*}
  From Figure \ref{fig:asymp}, we can see that
  $H_n - \log(r) - \gamma = \sum_{k=r}^\infty \Delta_k$.  The upper bound is
  given by computing the sum of the areas of the larger triangles:
  \begin{align*}
    \sum_{k=r}^\infty \Delta_k < \sum_{k=r}^\infty \frac{1}{2}\paren{\frac{1}{k} - \frac{1}{k+1}}
    = \sum_{k=r}^\infty \frac{1}{2k(k+1)} = \frac{1}{2r}
  \end{align*}
  The lower bound is given by computing the sum of the areas of the smaller triangles:
  \begin{align*}
    \sum_{k=r}^\infty \Delta_k > \sum_{k=r}^\infty \frac{1}{2}\frac{1}{(k+1)^2}
    > \sum_{k=r}^\infty \frac{1}{2(k+1)(k+2)} = \frac{1}{2(r+1)}
  \end{align*}
  \begin{figure}[h]
    \centering
    \def\svgwidth{\columnwidth/3*2}
    %% Creator: Inkscape inkscape 0.92.4, www.inkscape.org
%% PDF/EPS/PS + LaTeX output extension by Johan Engelen, 2010
%% Accompanies image file '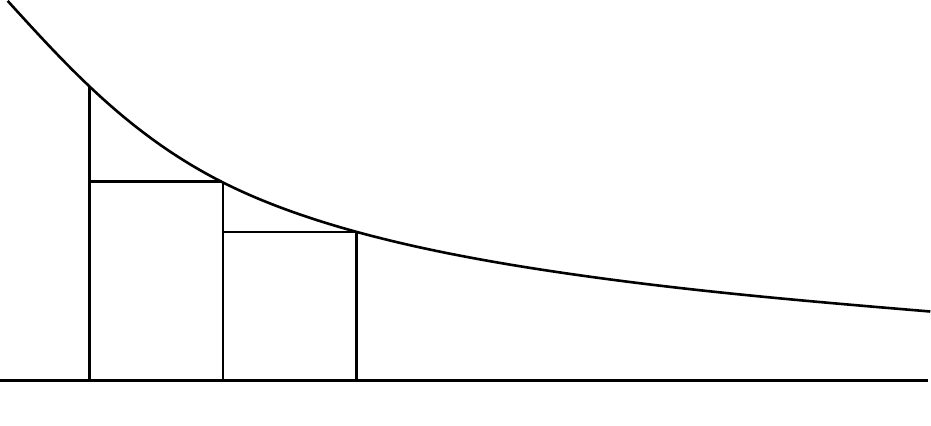' (pdf, eps, ps)
%%
%% To include the image in your LaTeX document, write
%%   \input{<filename>.pdf_tex}
%%  instead of
%%   \includegraphics{<filename>.pdf}
%% To scale the image, write
%%   \def\svgwidth{<desired width>}
%%   \input{<filename>.pdf_tex}
%%  instead of
%%   \includegraphics[width=<desired width>]{<filename>.pdf}
%%
%% Images with a different path to the parent latex file can
%% be accessed with the `import' package (which may need to be
%% installed) using
%%   \usepackage{import}
%% in the preamble, and then including the image with
%%   \import{<path to file>}{<filename>.pdf_tex}
%% Alternatively, one can specify
%%   \graphicspath{{<path to file>/}}
%%
%% For more information, please see info/svg-inkscape on CTAN:
%%   http://tug.ctan.org/tex-archive/info/svg-inkscape
%%
\begingroup%
  \makeatletter%
  \providecommand\color[2][]{%
    \errmessage{(Inkscape) Color is used for the text in Inkscape, but the package 'color.sty' is not loaded}%
    \renewcommand\color[2][]{}%
  }%
  \providecommand\transparent[1]{%
    \errmessage{(Inkscape) Transparency is used (non-zero) for the text in Inkscape, but the package 'transparent.sty' is not loaded}%
    \renewcommand\transparent[1]{}%
  }%
  \providecommand\rotatebox[2]{#2}%
  \newcommand*\fsize{\dimexpr\f@size pt\relax}%
  \newcommand*\lineheight[1]{\fontsize{\fsize}{#1\fsize}\selectfont}%
  \ifx\svgwidth\undefined%
    \setlength{\unitlength}{267.95343215bp}%
    \ifx\svgscale\undefined%
      \relax%
    \else%
      \setlength{\unitlength}{\unitlength * \real{\svgscale}}%
    \fi%
  \else%
    \setlength{\unitlength}{\svgwidth}%
  \fi%
  \global\let\svgwidth\undefined%
  \global\let\svgscale\undefined%
  \makeatother%
  \begin{picture}(1,0.45717719)%
    \lineheight{1}%
    \setlength\tabcolsep{0pt}%
    \put(0,0){\includegraphics[width=\unitlength,page=1]{gamma-asymp-src.pdf}}%
    \put(0.45208007,0.24631129){\color[rgb]{0,0,0}\makebox(0,0)[lt]{\smash{\begin{tabular}[t]{l}...\end{tabular}}}}%
    \put(0.08921057,0.00339855){\color[rgb]{0,0,0}\makebox(0,0)[lt]{\smash{\begin{tabular}[t]{l}$r$\end{tabular}}}}%
    \put(0.20801293,0.00024601){\color[rgb]{0,0,0}\makebox(0,0)[lt]{\smash{\begin{tabular}[t]{l}$r+1$\end{tabular}}}}%
    \put(0.35274606,0){\color[rgb]{0,0,0}\makebox(0,0)[lt]{\smash{\begin{tabular}[t]{l}$r+2$\end{tabular}}}}%
    \put(0.15118825,0.33627894){\color[rgb]{0,0,0}\makebox(0,0)[lt]{\smash{\begin{tabular}[t]{l}$\Delta_r$\end{tabular}}}}%
    \put(0.29554656,0.26243073){\color[rgb]{0,0,0}\makebox(0,0)[lt]{\smash{\begin{tabular}[t]{l}$\Delta_{r+1}$\end{tabular}}}}%
    \put(0,0){\includegraphics[width=\unitlength,page=2]{gamma-asymp-src.pdf}}%
    \put(0.6819793,0.09351196){\color[rgb]{0,0,0}\makebox(0,0)[lt]{\smash{\begin{tabular}[t]{l}$y=\frac{1}{x}$\\\end{tabular}}}}%
  \end{picture}%
\endgroup%

    \caption{\label{fig:asymp} Illustration of the asymptotics of
      $L_r$. $\Delta_r$ is the area between the curve $y=\frac{1}{x}$ and the
      rectangle of height $\frac{1}{r+1}$. The lower dotted lines are tangent to
      the curve at $r+1$ and $r+2$ respectively. }
  \end{figure}
\end{proof}
\end{document}